\newtheorem{theorem}{Theorem}[section]
\theoremstyle{definition}
\newtheorem{example}[theorem]{Example}
\newtheorem{remark}[theorem]{Remark}
\numberwithin{equation}{section}
\newcommand{\be}{\begin{equation}}
\newcommand{\ee}{\end{equation}}
\numberwithin{equation}{section}
\begin{document}

\noindent   
  \\[0.50in]


\title[ Inverse eigenvalues problem of nonnegative matrices $\cdots$]{Inverse eigenvalues problem of nonnegative matrices via unit lower triangular matrices }

\author[A. M. Nazari]{Alimohammad Nazari$^1$ }

\address{$^1$Department of Mathematics, Arak University, Arak, Iran. P. O. Box 38156-8943. }

\email{ a-nazari@araku.ac.ir }

\author[A. Nezami ]{ Atiyeh Nezami$^2$}

\address{$^2$Department of Mathematics, Arak University, Arak, Iran. P. O. Box 38156-8943.. }

\email{   a-nezami@arshad.araku.ac.ir  }

\subjclass[2010]{15A18,15A60,15A09, 93B10. }


\keywords{Nonnegative matrices, Unit triangular matrices,  Inverse eigenvalue problem. \\
\indent Received: dd mmmm yyyy,    Accepted: dd mmmm yyyy.
\\
\indent $^{*}$ Corresponding author}
\maketitle
\hrule width \hsize \kern 1mm


\begin{abstract}
 The main goal of this work is to use the unit lower  triangular  matrices  for solving inverse eigenvalue problem of nonnegative matrices (NIEP) and present the easier method  to solve this  problem. We solve the problem for any given number of the real and complex eigenvalues.
Finally, we present the necessary and sufficient conditions to solve this problem with this method.   
 \end{abstract}
\maketitle
\vspace{0.1in}
\hrule width \hsize \kern 1mm

\section{Introduction And Preliminaries }
A matrix is called unit lower triangular  if it is lower triangular matrix and all entries on its main diagonal are one.
The inverse of these matrices also is unit lower triangular. In Gaussian elimination method and LU factorization  unit lower triangular matrices play a very important roll.

The  nonnegative inverse eigenvalue problem (NIEP) asks
for necessary and sufficient conditions on a list
$\sigma=(\lambda_1,\lambda_2, \ldots,\lambda_n)$ of real or complex numbers
in order that it be the spectrum of a  nonnegative
matrix $A$ with spectrum $\sigma$, we will say that $\sigma$ is
  realizable   and that it is  realization of $\sigma$.
  
   Some necessary conditions  on the list of real number $\sigma=(\lambda_1,\lambda_2, \ldots,\lambda_n)$
to be the spectrum of a  nonnegative matrix are listed
below.\\    
\begin{align}\label{JLL}
\text{(1) The Perron eigenvalue $\max \{|\lambda_i |; \lambda_i \in \sigma \}$
 belongs to $\sigma$ (Perron-Frobenius theorem).}\\
\text{ (2) $s_k=\sum_{i=1}^n \lambda_i^k \ge 0.$ }\hspace{11cm}\,\, \nonumber \\
\text{(3)$s_k^m \le n^{m-1}s_{km}$ for $k,m = 1, 2, \ldots$ (JLL
inequality)\cite{18, 14M}}.\hspace{4.65cm}\nonumber 
\end{align}    
Although many mathematicians have worked on the inverse eigenvalue problem of nonnegative matrices \cite{ 1M, soto, 2M, 3M,  5, 6M, 7M,   10, 11, 12, Suleimanova}, our aim of this paper is solving the problem in the easier method via similarity of a matrix with upper triangular matrix. 

Two matrix $A$ and $B$ is called similar if there exist an invertible matrix $P$ such that $P AP^{-1}=B$. We recall that two similar matrices have same eigenvalues and this Theorem plays a very important roll in this paper.   

From now on in this paper $\lambda_1$ is the symbol of  Perron eigenvalue, and assume that all of given list of real spectrum  satisfy in necessary conditions (1),(2) and (3). Our method is the following, started by Guo in \cite{Guo} and we continue Guo's method in this paper.

The paper is organized as follows: At first for a given real set of eigenvalues that the
number of its positive eigenvalues is less than or equal the number of negative eigenvalues,
we solve the NIEP via unit lower triangular matrix. In continue for a given set of eigen-
values as $\sigma$ with nonnegative summation, in which the number of negative eigenvalues  $\sigma$ less
than the number of negative eigenvalues, we find a nonnegative matrices such that  $\sigma$ is
its spectrum. In the any case of problem at first we solve the problem for a finite number
of eigenvalues  $\sigma$ and then we provide the general solution. Finally we present a Theorem
that shows the necessary and sufficient conditions for solving the problem in our way.
In section 3 we consider complex spectrum with Perron eigenvalue $\lambda_1$ and nonnegative
summation and again solve the NIEP.

\section{Real spectrum}
Let  $\sigma=\{\lambda_1,\lambda_2, \ldots,\lambda_n\}$ be a given spectrum such that $\lambda_1 \ge \lambda_2\ge \cdots \ge \lambda_k > 0 \ge \lambda_{k+1} \ge  \cdots\ge \lambda_n$. We divide the problem into two parts,   the first part is  $k \le \frac{n}{2}$ and the another part $k > \frac{n}{2}$ and solve the problem. 
\subsection{The spectrum with $k \le \frac{n}{2}$ }
In this section we study NIEP with condition $k \le \frac{n}{2}$.
\subsubsection{Spectrum with one positive eigenvalues}
\begin{theorem}   \cite{Suleimanova}.
Assume that given $ \sigma = \{\lambda_1, \cdots , \lambda_n \} $
such that $ \lambda_1 >0 \geq \lambda_2 \geq \cdots \geq \lambda_n $ and $ \sum_{i=1}^n \lambda_i \ge 0,$ then there exist a set of  nonnegative matrix that $ \sigma $ is its spectrum. 
\end{theorem}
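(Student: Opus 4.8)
The plan is to exploit the paper's guiding principle. Since the eigenvalues of a triangular matrix are exactly its diagonal entries and similar matrices share their spectrum, it suffices to produce an upper triangular matrix $T$ with diagonal $(\lambda_1,\ldots,\lambda_n)$ together with a unit lower triangular matrix $L$ such that $A:=LTL^{-1}$ has only nonnegative entries; then $A$ is nonnegative and $\operatorname{spec}(A)=\operatorname{spec}(T)=\sigma$. Before starting I would record the only arithmetic consequence of the hypotheses that I expect to use: from $\sum_{i=1}^n\lambda_i\ge 0$ and $\lambda_j\le 0$ for $j\ge 2$ one gets $\lambda_1\ge\sum_{j=2}^n(-\lambda_j)=\sum_{j=2}^n|\lambda_j|$. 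This single inequality is the ``budget'' that lets the large positive diagonal entry $\lambda_1$ pay for all the negative ones.

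First I would settle the base case $n=2$ by hand. Taking $L=\left(\begin{smallmatrix}1&0\\1&1\end{smallmatrix}\right)$ and $T=\left(\begin{smallmatrix}\lambda_1&t\\0&\lambda_2\end{smallmatrix}\right)$ gives
\[
A=LTL^{-1}=\begin{pmatrix}\lambda_1-t & t\\ \lambda_1-\lambda_2-t & \lambda_2+t\end{pmatrix},
\]
which is entrywise nonnegative precisely when $|\lambda_2|\le t\le\lambda_1$, an interval that is nonempty by the budget inequality. I would then run an induction on $n$. Deleting the most negative value $\lambda_n$ leaves a list $\{\lambda_1,\ldots,\lambda_{n-1}\}$ that again satisfies the hypotheses, because $\sum_{j=1}^{n-1}\lambda_j=\big(\sum_{j=1}^{n}\lambda_j\big)-\lambda_n\ge -\lambda_n\ge 0$. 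By the inductive hypothesis this shorter list is realized as $A'=L'T'(L')^{-1}$ with $T'$ upper triangular and $L'$ unit lower triangular. I would border these triangular factors, setting $T=\left(\begin{smallmatrix}T'&v\\0&\lambda_n\end{smallmatrix}\right)$ and $L=\left(\begin{smallmatrix}L'&0\\w^{T}&1\end{smallmatrix}\right)$, so that $A=LTL^{-1}$ is again similar to a triangular matrix with the correct diagonal. A direct block multiplication then shows that its $(n,n)$ entry equals $w^{T}v+\lambda_n$, its leading principal block equals $A'-L'vw^{T}(L')^{-1}$, its upper border equals $L'v$, and its lower border equals $\big(w^{T}T'-(w^{T}v+\lambda_n)w^{T}\big)(L')^{-1}$, all explicit expressions in the entries of $v$ and $w$.

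The main obstacle is exactly the simultaneous nonnegativity of all these entries, because the free vectors $v$ and $w$ are pulled in opposite directions. To make the $(n,n)$ entry nonnegative one needs $w^{T}v\ge|\lambda_n|$, which forbids $v$ and $w$ from being too small; but the leading block is the good nonnegative matrix $A'$ perturbed by the rank-one subtraction $L'vw^{T}(L')^{-1}$, and the two border blocks must keep the correct sign, which forbids $v$ and $w$ from being too large or wrongly signed. The heart of the proof is to show this feasibility region is nonempty, and here the budget inequality $\lambda_1\ge\sum_{j\ge 2}|\lambda_j|$ is decisive: the entries of $A'$ are of size comparable to $\lambda_1$, so they can absorb a rank-one correction of size comparable to $|\lambda_n|$ while the borders are assigned nonnegative coordinates. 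I would therefore exhibit an explicit admissible choice of $v$ and $w$ (for instance, scaling a fixed nonnegative direction until $w^{T}v=|\lambda_n|$ and then verifying the finitely many entry inequalities), which simultaneously proves the theorem and, by reading off the construction for each $n$, supplies the general realization the paper promises.
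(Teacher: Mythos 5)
Your framework---conjugating an upper triangular $T$ carrying $\sigma$ on its diagonal by a unit lower triangular $L$---and your $n=2$ computation are exactly the paper's, and your block formulas for the bordered product $LTL^{-1}$ are correct. The genuine gap is in the inductive step: the nonemptiness of the feasibility region for $(v,w)$, which you yourself call the heart of the proof, is asserted rather than proved, and the heuristic offered in its support, that ``the entries of $A'$ are of size comparable to $\lambda_1$,'' is false. The plain inductive hypothesis hands you only \emph{some} nonnegative realization $A'$, with no entrywise slack; realizations of such lists can have many zero entries (the remarks following the theorem in the paper construct realizations whose entire diagonal is zero). At the same time you are forced to take $w^{T}v\ge|\lambda_n|$, so the subtracted term $L'vw^{T}(L')^{-1}$ is not a small perturbation, and since $(L')^{-1}$ has negative below-diagonal entries its sign pattern is not under control: if $w_j>0$ for some $j\ge 2$, then (for the paper's choice of $L'$) the positive quantity $(L'v)_i\,w_j$ is subtracted from the $(i,j)$ entry of $A'$, which may be exactly $0$. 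So ``scaling a fixed nonnegative direction until $w^{T}v=|\lambda_n|$ and verifying finitely many inequalities'' does not go through for an arbitrary inductively supplied $A'$.

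The paper avoids this difficulty altogether with a one-shot, closed-form construction: all off-diagonal mass of $T$ is placed in the first row $(\alpha_2,\dots,\alpha_n)$, $L$ has first column all ones, and $C=LTL^{-1}$ is written out explicitly as in \eqref{e1}; nonnegativity reduces to the transparent conditions \eqref{se1}, namely $-\lambda_i\le\alpha_i$ and $t=\sum_{i\ge2}\alpha_i\le\lambda_1$, which your budget inequality satisfies with $\alpha_i=-\lambda_i$. Your induction is repairable, but only by importing exactly this structure into the hypothesis: require that the shorter list be realized by a matrix of the form \eqref{e1} whose first column retains slack $\lambda_1-t'\ge-\lambda_n$, then border with $w=e_1$ and $v=-\lambda_n e_1$. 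Since $(L')^{-1}$ is unit lower triangular, $w^{T}(L')^{-1}=e_1^{T}$, so the rank-one correction hits only the first column (where the slack lives), the $(n,n)$ entry becomes $0$, the upper border is $-\lambda_n\mathbf{1}\ge 0$, and the resulting matrix is precisely \eqref{e1} for $n$. As written, though, the feasibility verification is missing, and it is the only nontrivial content of the theorem.
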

{\bf Proof}. Let $n=2$, then consider the  upper triangular matrix $A=\left[ \begin {array}{ll} \lambda_{{1}}&\alpha_2\\ 0&
\lambda_{{2}}\end {array} \right]
$ and $L=\left[ \begin {array}{cc} 1&0\\ 1&1\end {array}
 \right]$, therefore the following matrix
  $$C=LAL^{-1}=\left[ \begin {array}{ll} \lambda_{{1}}-\alpha_2&\alpha_2
\\\noalign{\medskip}\lambda_{{1}}-\alpha_2-\lambda_{{2}}&\alpha_2+\lambda_
{{2}}\end {array} \right],
$$
has eigenvalues $\lambda_1$ and $\lambda_2$ and if $-\lambda_2 \le \alpha_2 \le \lambda_1$, then the matrix $C$ is nonnegative. 

For $n=3$ we consider $A=\left[ \begin {array}{lll} \lambda_{{1}}&\alpha_2&\alpha_3\\ 0
&\lambda_{{2}}&0\\ 0&0&\lambda_{{3}}\end {array}
 \right] $ and $L=\left[ \begin {array}{ccc} 1&0&0\\ 1&1&0
\\ 1&0&1\end {array} \right] 
$, then the matrix 
$$
C=LAL^{-1}=\left[ \begin {array}{lll} 
\lambda_1-\alpha_2-
\alpha_3&\alpha_2&\alpha_3\\ 
\lambda_1-\alpha_2- \lambda_2-\alpha_3&\alpha_2+ \lambda_2 & \alpha_3\\
 \lambda_1-\alpha_2-\alpha_3-\lambda_3&\alpha_2&\alpha_3+\lambda_3\end {array} \right],
$$
is similar to the matrix $A$ and if  $-\lambda_2\le \alpha_2 $, $-\lambda_3 \le \alpha_3$ and $\alpha_2 + \alpha_3 \le \lambda_1$ then the matrix $C$ is nonnegative.

In continue, we consider
$$A=\left[ \begin {array}{lllll} 
\lambda_{1}&\alpha_2&\alpha_3&\cdots & \alpha_n\\ 0
&\lambda_{{2}}&0&\cdots & 0 \\
&& \ddots &&\\
0&0 & 0 &\ddots  &0\\
0 & 0 & 0&\cdots &\lambda_{{n}}\end {array}
 \right], 
 $$ 
 and 
 $$
 L=\left[\begin{array}{lllll}
1 & 0 & 0 & \cdots & 0 \\ 
1 & 1 & 0 & \cdots & 0 \\ 
  &  & \ddots &   &  \\ 
1 & 0 &0 & \ddots & 0 \\ 
1 & 0 & 0 & \cdots & 1
\end{array}\right],
$$
then the matrix 
\begin{equation}\label{e1}
C=LAL^{-1}=\left[ \begin {array}{lllll} \lambda_{{1}}-t  &\alpha_2&\alpha_3 & \cdots & \alpha_n\\ 
\lambda_{1}-\lambda_2-t & \alpha_2+ \lambda_2  &\alpha_3 & \cdots & \alpha_n \\
&& \ddots &&\\
\lambda_{ 1 } -\lambda_{n-1}-t&\alpha_2& \alpha_3  &\ddots &\alpha_n\\
\lambda_{ 1 } - \lambda_n -t &\alpha_2 & \alpha_3 &\cdots &\alpha_n +\lambda_{n}
\end {array} \right], 
\end{equation}
that $t = \sum_{i=2}^{n} \alpha_i$ is similar  to the matrix $A$,
and if  
\begin{align}\label{se1}
-\lambda_i & \le \alpha_i, \qquad i=2, 3, \cdots , n,\nonumber\\
 t & \le \lambda_1. 
\end{align} 
then the matrix $C$ is nonnegative and has eigenvalues $\{\lambda_1, \lambda_2, \cdots, \lambda_n\}$.
\begin{remark}
If given the spectrum $\sigma$ with   zero summation, then in the above Theorem it is necessary that we lie the value $\lambda_i$ on the $(i,i)$  entry of main diagonal of the matrix $A$ for $i=1,2,\cdots,n$  and   $-\lambda_i$ in the entry $(1,i)$  of this matrix  for $i=1,2,\cdots,n$ and then the all elements on main diagonal of matrix $C$ are zero. 
\end{remark}
\begin{remark}
In Theorem 2.1 if $-\sum_{i=2}^n \lambda_i \le \lambda_1$ and we set $\alpha_i = - \lambda_i $ for $i= 2, 3, \cdots, n$, then we can construct a nonnegative matrix that all elements of its main diagonal are zero except elements of the first row and the fist column.
For instance if $\sigma = \{10 , -2, -2, -2, -1, -1 \}$, then by \eqref{e1} the following matrix has spectrum $\sigma $,
\[
C=\left [ \begin{array}{cccccc}
2  & 2 & 2 & 2 & 1 & 1 \\ 
4 & 0 & 2 & 2 & 1 & 1 \\ 
4 & 2 & 0 & 2 & 1 & 1 \\ 
4 & 2 & 2  & 0 & 1 & 1 \\ 
3 & 2 & 2 & 2 & 0 & 1 \\ 
3 & 2 & 2 & 2 & 1 & 0
\end{array} \right ].
\]
that $\alpha_i$ for $i=2, \cdots n$ satisfy  in the conditions  \eqref{se1}.  In this case we can determine all elements that lie on the main diagonal of matrix $C$ that summation of them is equal to the summations of eigenvalues of the matrix $A$, for instance 
if  we want  the first   four elements of main diagonal of matrix  \eqref{e1} are $\frac{1}{2}$, 
we must consider $\alpha_2=2.5, \alpha_3 = 2.5, \alpha_4=2.5, \alpha_5= 1, \alpha_6 = 1$
 then $t= 9.5 $ that satisfy in conditions \eqref{se1}, and   and the matrix $C$ is as below
\[
 \left[ \begin {array}{cccccc}  0.5& 2.5& 2.5& 2.5&1&1
\\ \noalign{\medskip} 2.5& 0.5& 2.5& 2.5&1&1\\ \noalign{\medskip} 2.5&
 2.5& 0.5& 2.5&1&1\\ \noalign{\medskip} 2.5& 2.5& 2.5& 0.5&1&1
\\ \noalign{\medskip} 1.5& 2.5& 2.5& 2.5&0&1\\ \noalign{\medskip} 1.5&
 2.5& 2.5& 2.5&1&0\end {array} \right] 
\]
\end{remark}
\subsubsection{Spectrum with two  positive eigenvalues}

In this section, at first we consider $ \sigma = \{\lambda_1, \lambda_2, \lambda_3, \lambda_4 \}$ such that $ \lambda_1 \geq \lambda_2 > 0 \ge \lambda_3 \geq \lambda_4 $
and $ \sum \lambda_i \geq 0$, $ \lambda_1 \geq |\lambda_i|, i= 3,4 $ and in continue for a given set $\sigma$ with two positive eigenvalues and three negative eigenvalues solve the problem and finally for two positive eigenvalues and  more than three negative eigenvalues with nonnegative summation  again study the problem.
\begin{theorem}
 Let $ \sigma = \{\lambda_1, \lambda_2, \lambda_3, \lambda_4 \}$ such that $ \lambda_1 \geq \lambda_2 > 0 \ge \lambda_3 \geq \lambda_4 $
and let $ \sum \lambda_i \geq 0$, $ \lambda_1 \geq |\lambda_i|, i= 3,4 $, then there exist a set of nonnegative matrices that $ \sigma $ is its spectrum.
\end{theorem}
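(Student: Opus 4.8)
The plan is to follow exactly the similarity-via-unit-lower-triangular scheme used for the one-positive case: I will exhibit an upper triangular matrix $A$ carrying the four prescribed eigenvalues on its main diagonal (so that $A$, and anything similar to it, has spectrum $\sigma$) together with a unit lower triangular matrix $L$, and then arrange the free off-diagonal entries of $A$ so that $C=LAL^{-1}$ is entrywise nonnegative. Since $L$ is invertible, $C$ is similar to $A$ and hence realizes $\sigma$, so the only thing to be checked is nonnegativity of $C$. The new feature compared with Theorem 2.1 is that there are now two positive eigenvalues $\lambda_1,\lambda_2$ acting as ``sources'' of positive mass rather than one, so the single-arrow $L$ of \eqref{e1} must be replaced by one that can feed positive mass from both of the first two rows into the rows associated with $\lambda_3,\lambda_4$.

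I would then split the argument according to whether $\lambda_2$ can, by itself, dominate one of the negative eigenvalues. \textbf{Easy case: $\lambda_2\ge|\lambda_3|$ (equivalently $\lambda_2+\lambda_3\ge0$).} Here I pair $(\lambda_2,\lambda_3)$ and $(\lambda_1,\lambda_4)$; each pair has nonnegative sum, the first by assumption and the second because $\lambda_1\ge|\lambda_4|$. Applying the $n=2$ construction from the proof of Theorem 2.1 to each pair produces two nonnegative $2\times2$ blocks, and the block-diagonal matrix built from them is nonnegative with spectrum $\sigma$. In the language of the scheme above this corresponds to taking both $A$ and $L$ block diagonal, so this case is immediate.

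\textbf{Hard case: $\lambda_2<|\lambda_3|$ (hence $\lambda_2<|\lambda_3|\le|\lambda_4|$).} Now no pairing with $\lambda_2$ as the dominant entry is available, and a block-diagonal construction must fail: the $2\times2$ Suleimanova block for $(\lambda_2,\lambda_3)$ would require an entry equal to $\lambda_2-\alpha$ with $|\lambda_3|\le\alpha\le\lambda_2$, an empty range, so it is forced to carry a negative diagonal entry of size $|\lambda_3|-\lambda_2>0$. The idea is to couple this deficient block to the $\lambda_1$ source through the off-diagonal part of $L$, routing enough of $\lambda_1$'s surplus into the offending position to restore nonnegativity while leaving the spectrum untouched. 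A bookkeeping of the available surplus shows that this is possible exactly when $\lambda_1-|\lambda_4|\ge|\lambda_3|-\lambda_2$, and since this is the same inequality as $\lambda_1+\lambda_2+\lambda_3+\lambda_4\ge0$, the hypothesis $\sum_{i}\lambda_i\ge0$ is precisely what guarantees feasibility. Concretely I would take an $A$ whose first two rows contain parameters $\alpha_3,\alpha_4$ (from the $\lambda_1$ row) and $\beta_3,\beta_4$ (from the $\lambda_2$ row) feeding columns $3,4$, an $L$ that is the identity in its first two diagonal columns with free entries in positions $(3,1),(3,2),(4,1),(4,2)$, compute $C=LAL^{-1}$, and read off the linear inequalities in these parameters that make every entry nonnegative.

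I expect the hard case to be the main obstacle: one must choose the several free parameters so that all entries of the $4\times4$ matrix $C$ are simultaneously nonnegative, and then verify that the resulting feasibility region is nonempty under, and only under, the stated hypotheses ($\lambda_1$ the Perron root, $\lambda_1\ge|\lambda_3|,|\lambda_4|$, and $\sum_{i}\lambda_i\ge0$). The clean reduction of this feasibility to the single inequality $\sum_{i}\lambda_i\ge0$ is the crux of the proof; the easy case and the verification that $C$ is similar to $A$ are routine.
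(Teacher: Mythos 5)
Your overall framework --- an upper triangular $A$ carrying $\sigma$ on its diagonal, a unit lower triangular $L$, and nonnegativity conditions on $C=LAL^{-1}$ --- is exactly the paper's, and your easy case ($\lambda_2+\lambda_3\ge 0$: direct sum of two Suleimanova $2\times 2$ blocks) is complete and correct. But in the one place where the theorem has real content, the proposal stops at a plan: in the hard case $\lambda_2<|\lambda_3|$ you never write down $C$ or check its sixteen entries, and the decisive claim (``a bookkeeping of the available surplus shows that this is possible exactly when $\lambda_1-|\lambda_4|\ge|\lambda_3|-\lambda_2$'') is asserted, not proved. The arithmetic identity is right --- that inequality is equivalent to $\sum_i\lambda_i\ge 0$ given $\lambda_3,\lambda_4\le 0$ --- but matching two inequalities is not the same as exhibiting parameters making every entry of a specific $4\times 4$ matrix nonnegative; moreover, by placing the free parameters in positions $(3,1),(3,2),(4,1),(4,2)$ of $L$ you make the entries of $C$ depend quadratically on them through $L^{-1}$, so the feasibility analysis you defer is precisely the step that could fail. (Your aside about ``only under'' the hypotheses is also unnecessary: the theorem claims sufficiency, not necessity.) As it stands, the hard case is an outline, not a proof.

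For comparison, the paper needs no case split and keeps $L$ fixed with $0$--$1$ entries, putting all free parameters in $A$: it takes $L$ with rows $(1,0,0,0)$, $(1,1,0,0)$, $(1,0,1,0)$, $(1,1,0,1)$, and $A$ with $\alpha_2+\alpha_4$ in position $(1,2)$, $\alpha_3$ in $(1,3)$, a coupling parameter $\alpha$ in $(2,3)$, and $\alpha_4$ in $(2,4)$. The resulting $C$ in \eqref{e2} is nonnegative under the purely linear conditions \eqref{se2}: $\alpha_i\ge-\lambda_i$, $t=\alpha_2+\alpha_3+\alpha_4\le\lambda_1$, and $-\alpha_3\le\alpha\le\lambda_1-\lambda_2-t$. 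The $1$ in position $(4,2)$ of $L$ routes the $\lambda_2$ row into the $\lambda_4$ row, and $\alpha$ is exactly the ``routing dial'' you envisage, transferring $\lambda_1$'s surplus through the $(2,1)$ entry $\lambda_1-\lambda_2-t-\alpha$; because $L$ is $0$--$1$, feasibility is immediate to verify: with $\alpha_3=-\lambda_3$, $\alpha_4=-\lambda_4$, and $\alpha_2=-\lambda_2$ (or $\alpha_2=-\alpha_4$ when $\lambda_2>|\lambda_4|$), the binding constraints reduce to $t\le\lambda_1$, i.e.\ $\sum_i\lambda_i\ge 0$, and to $\lambda_1+\lambda_4\ge 0$, i.e.\ $\lambda_1\ge|\lambda_4|$ --- the same bookkeeping you describe, but realized by explicit linear inequalities. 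If you want to salvage your two-case plan, the quickest repair is to abandon the parametrized $L$ in the hard case and adopt the paper's fixed $L$ with the single scalar $\alpha$, then verify the interval $[-\alpha_3,\lambda_1-\lambda_2-t]$ is nonempty under your surplus inequality.
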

{\bf Proof}. We start with the following matrices
$$
A=\left[ \begin {array}{llll} \lambda_{{1}}&\alpha_{{2}} +\alpha_{{4}
}&\alpha_{{3}}&0\\ 
0&\lambda_{{2}}&\alpha&\alpha_{{4}}\\ 
0&0&\lambda_{{3}}&0\\ 
0&0&0&\lambda_{{4}}
\end {array} \right],
\qquad
L=\left[ \begin {array}{cccc} 1&0&0&0\\1&1&0&0
\\ 1&0&1&0\\
1&1&0&1\end {array}
 \right].
$$
Then the following matrix 
\begin{equation}\label{e2}
C=LAL^{-1}=\left[ \begin {array}{llll} 
 \lambda_1 - t &\alpha_2+\alpha_4 &\alpha_3&0\\
 \lambda_1 - \lambda_2 - t  -\alpha &\alpha_2 +\lambda_2 &\alpha_3+\alpha &\alpha_4\\
 \lambda_1- \lambda_3 - t &\alpha_2 +\alpha_4 &\alpha_3 +\lambda_3 &0\\
  \lambda_1 -\lambda_2 - t -\alpha &\alpha_2 +\lambda_2 -\lambda_4 &\alpha_3 +\alpha &\alpha_4 +\lambda_4 
\end {array} \right], 
\end{equation}
that $t= \sum_{i=2}^{4} \alpha_i$ is similar to the matrix $A$.   If 
\begin{align}\label{se2}
-\lambda_i & \le \alpha_i, \qquad i= 2,3,4,\nonumber\\
t & \le \lambda_1 \nonumber \\
-\alpha_3 & \le \alpha \le \lambda_1 - \lambda_2 -t,
\end{align} 
then the  matrix $C$
 is nonnegative.
\begin{remark}
In above Theorem one of the interesting solution is $\alpha_i:=\lambda_i, \,\, i=1,2$ and $\alpha=\lambda_3$ that satisfies in condition  \eqref{se2} with  zero summation and  the all another elements of matrix $C$ are zero. For instance  let $\sigma = \{ 7, 3, -5, -5\}$, then $\sigma$ is realizable by following  nonnegative matrices
  \[
C =\left[ \begin {array}{cccc}
0&2&5&0 \\
2&0&0&5\\ 
5&2&0&0 \\ 
2&5&0&0
\end {array} \right],
 \]  
 this spectrum is studied in \cite{borobia} and we solve this problem easier. 
\end{remark}
Now we consider the set of $\sigma$ with two positive eigenvalues and three negative eigenvalues with special conditions.
\begin{theorem}
Consider spectrum $\sigma = \{\lambda_1, \lambda_2, \lambda_3, \lambda_4, \lambda_5  \}$
such that $\lambda_1 \ge \lambda_2 >0 > \lambda_3 \ge \lambda_4 \ge \lambda_5$ with nonnegative summation. If 
$t = \sum_{i=2}^{5} \alpha_i$ and there exist a nonnegative $5 \times 5$ matrix that $\sigma $ is its spectrum.
\end{theorem}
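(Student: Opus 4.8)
The plan is to carry over, in spirit, the Guo-type device behind \eqref{e1} and \eqref{e2}: realize $\sigma$ first on the diagonal of an upper triangular matrix $A$, then conjugate by a unit lower triangular $L$ and force the product $C=LAL^{-1}$ to be entrywise nonnegative. Since conjugation preserves the spectrum, $C$ has spectrum $\sigma$ the moment it exists, so the entire content of the theorem is the nonnegativity of $C$ for a suitable choice of the free parameters.

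Concretely, I would take the $5\times5$ analogue of \eqref{e2},
\[
A=\begin{bmatrix}
\lambda_1 & \alpha_2+\alpha_4+\alpha_5 & \alpha_3 & 0 & 0\\
0 & \lambda_2 & \alpha & \alpha_4 & \alpha_5\\
0 & 0 & \lambda_3 & 0 & 0\\
0 & 0 & 0 & \lambda_4 & 0\\
0 & 0 & 0 & 0 & \lambda_5
\end{bmatrix},
\qquad
L=\begin{bmatrix}
1 & 0 & 0 & 0 & 0\\
1 & 1 & 0 & 0 & 0\\
1 & 0 & 1 & 0 & 0\\
1 & 1 & 0 & 1 & 0\\
1 & 1 & 0 & 0 & 1
\end{bmatrix},
\]
in which the all-ones first column of $L$ lets the Perron root $\lambda_1$ absorb the negative part of the spectrum (exactly as in \eqref{e1}), while the two extra ones in the second column attach the two most negative eigenvalues $\lambda_4,\lambda_5$ to the second positive eigenvalue $\lambda_2$, and the single entry $\alpha$ in position $(2,3)$ is the coupling parameter playing the role it did for $n=4$. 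Since $A$ is triangular with diagonal $(\lambda_1,\dots,\lambda_5)$, its spectrum is $\sigma$.

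Next I would compute $C=LAL^{-1}$ outright; $L^{-1}$ is again unit lower triangular and just as sparse, so this is a bookkeeping exercise, and I would then read off the sign-critical entries. By analogy with \eqref{se2} they produce the linear system
\begin{align*}
-\lambda_i &\le \alpha_i, \qquad i=2,3,4,5,\\
0 &\le \alpha_2+\alpha_4+\alpha_5,\\
t &\le \lambda_1, \qquad t=\textstyle\sum_{i=2}^5\alpha_i,\\
-\alpha_3 &\le \alpha \le \lambda_1-\lambda_2-t,
\end{align*}
together with entries that are automatically nonnegative because $\lambda_3,\lambda_4,\lambda_5<0$ (for instance the $(3,1)$ entry $\lambda_1-\lambda_3-t\ge\lambda_1-t\ge0$). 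The coupling interval for $\alpha$ is nonempty exactly when $\alpha_2+\alpha_4+\alpha_5\le\lambda_1-\lambda_2$, which combines with the constraint $\alpha_2+\alpha_4+\alpha_5\ge0$ to demand only $\lambda_1\ge\lambda_2$, a hypothesis we already have.

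The hard part will be exhibiting one parameter vector that meets every inequality at once, i.e.\ proving the polytope cut out by this system is nonempty under the standing hypotheses. I would try the distinguished choice $\alpha_i=-\lambda_i$ for $i=3,4,5$ (mirroring the remark following \eqref{e2}) and then pick $\alpha_2$ as small as the constraints $\alpha_2\ge-\lambda_2$ and $\alpha_2+\alpha_4+\alpha_5\ge0$ permit, so as to minimise $t$; this reduces $t\le\lambda_1$ to the nonnegative-summation hypothesis $\sum_{i=1}^5\lambda_i\ge0$ in the generic regime and reduces the coupling condition to a Perron-dominance bound of the type $\lambda_1\ge|\lambda_i|$ already present in the $4\times4$ case. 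I expect the only delicate point to be the regime where $\lambda_2$ is large relative to the negative mass attached to it: there the budget $t\le\lambda_1$ becomes the binding constraint, and closing that case cleanly may require redistributing which negative eigenvalues are coupled to $\lambda_2$ (that is, moving the ones in the second column of $L$) so that enough negative mass sits against $\lambda_2$ to keep $t$ within $\lambda_1$. Once a feasible vector is produced, nonnegativity of $C$, and hence the existence of the desired realization, follows at once.
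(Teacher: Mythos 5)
Your construction coincides with the paper's own proof essentially verbatim: the same upper triangular $A$ with first row $(\lambda_1,\ \alpha_2+\alpha_4+\alpha_5,\ \alpha_3,\ 0,\ 0)$ and second row $(0,\ \lambda_2,\ \alpha,\ \alpha_4,\ \alpha_5)$, the same $L$ with ones in positions $(2,1),(3,1),(4,1),(5,1),(4,2),(5,2)$, the same matrix $C=LAL^{-1}$, and the same inequality system $-\lambda_i\le\alpha_i$ ($i=2,\dots,5$), $t\le\lambda_1$, $-\alpha_3\le\alpha\le\lambda_1-\lambda_2-t$. (Your added condition $\alpha_2+\alpha_4+\alpha_5\ge 0$ is a mild repair of the paper's somewhat garbled side conditions $\alpha_5<\lambda_2$, $\alpha_4+\alpha_5\ge\lambda_2$.) So up to your last paragraph you have reproduced the paper's argument.

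The step you candidly defer --- showing the polytope cut out by these inequalities is nonempty under only the stated hypotheses --- is, however, not a closable gap: feasibility genuinely fails. Take $\sigma=\{5,4,-3,-3,-3\}$, which satisfies $\lambda_1\ge\lambda_2>0>\lambda_3\ge\lambda_4\ge\lambda_5$ with $\sum\lambda_i=0$. The constraints force $\alpha_3,\alpha_4,\alpha_5\ge 3$ and $\alpha_2\ge-\lambda_2=-4$, while $t\le\lambda_1=5$ then forces $\alpha_2=-4$, $\alpha_3=\alpha_4=\alpha_5=3$, $t=5$; the coupling interval becomes $[-\alpha_3,\ \lambda_1-\lambda_2-t]=[-3,-4]=\emptyset$. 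Nor can your fallback of redistributing the ones in the second column of $L$ rescue this case: the list $\{5,4,-3,-3,-3\}$ is not realizable by \emph{any} nonnegative matrix, since it violates the Laffey--Meehan necessary condition $s_2^2\le 4s_4$ for trace-zero $5\times5$ realizability ($s_2^2=68^2=4624>4496=4\cdot 1124=4s_4$). So the theorem as literally stated is false, and what both you and the paper actually establish is only the conditional claim: \emph{if} the listed inequalities admit a solution, then $C$ is a nonnegative realization of $\sigma$. The paper's proof silently omits the feasibility verification altogether; your write-up is more honest in flagging it, but the program you sketch for proving it ($\alpha_i=-\lambda_i$, minimizing $t$, moving negative mass between columns) cannot succeed in general.
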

{\bf Proof}. In this case we consider 
\[
A= \left[ \begin {array}{ccccc} 
\lambda_1 &\alpha_2 +\alpha_4 +\alpha_5 &\alpha_3 &0&0\\
 0&\lambda_2 &\alpha &\alpha_4 &\alpha_5\\
 0&0&\lambda_3 &0&0\\ 
 0&0&0&\lambda_4 &0\\
 0&0&0&0&\lambda_5
\end {array} \right],
\]
where $\alpha_i \ge -\lambda_i, i=4,5$ and $\alpha_5 < \lambda_2 $ and also $\alpha_4+\alpha_5 \ge \lambda_2$
and 
\[
L= \left[ \begin {array}{ccccc} 
1&0&0&0&0\\
 1&1&0&0&0\\ 
 1&0&1&0&0\\ 
 1&1&0&1&0\\ 
 1&1&0&0&1
 \end {array} \right],
\]
then 
\begin{align}
\label{e3}
C=&L A L^{-1}\nonumber \\ =&
\left[ \begin {array}{lllll} 
\lambda_1-t&\alpha_2+\alpha_4+\alpha_5&\alpha_3&0&0\\ 
\lambda_1-\lambda_2-t-\alpha&\alpha_2+\lambda_2&\alpha_3+\alpha&
\alpha_4&\alpha_5\\
\lambda_1-\lambda_3-t&\alpha_2+\alpha_4+\alpha_5&\alpha_3+\lambda_3&0&0\\
\lambda_1 -\lambda_2-t-\alpha&\alpha_2+\lambda_2-\lambda_4&\alpha_3+
\alpha&\alpha_4+\lambda_4&\alpha_5 \\
\lambda_1 -\lambda_2-t-\alpha & \alpha_2+\lambda_2-\lambda_5 &\alpha_3+\alpha &\alpha_4&\alpha_5+\lambda_5
\end {array} \right].
\end{align}
The matrix $C$
is similar to the matrix $A$ and if satisfy the conditions 
\begin{align*}
-\lambda_i & \le \alpha_i, \qquad i= 2,3,4,5\nonumber\\
t & \le \lambda_1 \nonumber \\
-\alpha_3 & \le \alpha \le \lambda_1 - \lambda_2 -t,
\end{align*}
  then 
this matrix is nonnegative.
\begin{theorem}
Consider spectrum $\sigma = \{\lambda_1, \lambda_2, \cdots, \lambda_n  \}$
such that $\lambda_1 \ge \lambda_2 >0 \ge \lambda_3 \ge \cdots\ge \lambda_n$ with nonnegative summation.
 If   $t = \sum_{i=2}^{n} \alpha_i$ ,
 then there exist a set of  nonnegative $n \times n$ matrix that $\sigma $ is its spectrum.
\end{theorem}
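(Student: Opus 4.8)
The plan is to carry over the block construction of Theorems 2.3 and 2.4 verbatim to arbitrary $n$. First I would take the $n\times n$ upper triangular matrix $A$ with diagonal $(\lambda_1,\ldots,\lambda_n)$ whose only nonzero off-diagonal entries sit in the first two rows: the $(1,2)$ entry is $\alpha_2+\sum_{i=4}^{n}\alpha_i$, the $(1,3)$ entry is $\alpha_3$, the $(2,3)$ entry is the free parameter $\alpha$, and the $(2,i)$ entry is $\alpha_i$ for $i\ge 4$. Paired with it I would take the unit lower triangular $L$ whose first column is all ones, whose second column has ones in rows $2,4,5,\ldots,n$ (but a zero in row $3$), and which is otherwise the identity; this is exactly the $n=4,5$ pattern used in \eqref{e2} and \eqref{e3}. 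Since $A$ is triangular with the $\lambda_i$ on its diagonal it has spectrum $\sigma$, and $C:=LAL^{-1}$ is similar to $A$, so it inherits the spectrum $\sigma$. Hence the whole problem reduces to showing that the parameters can be chosen so that $C\ge 0$.

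Next I would record $L^{-1}$ explicitly: writing $u=\sum_{i\ge2}e_i$ and $v=\sum_{i\ge4}e_i$ one checks $L=I+ue_1^{\top}+ve_2^{\top}$ and $L^{-1}=I+(v-u)e_1^{\top}-ve_2^{\top}$, i.e. $L^{-1}$ has $-1$ in positions $(2,1),(3,1)$ and in positions $(i,2)$ for $i\ge 4$. Multiplying out $C=LAL^{-1}$ then gives, with $t=\sum_{i=2}^n\alpha_i$, four families of rows: row $1$ equals $(\lambda_1-t,\ \alpha_2+\sum_{i\ge4}\alpha_i,\ \alpha_3,0,\ldots,0)$; row $2$ equals $(\lambda_1-\lambda_2-t-\alpha,\ \alpha_2+\lambda_2,\ \alpha_3+\alpha,\ \alpha_4,\ldots,\alpha_n)$; the ``first negative'' row $3$ equals $(\lambda_1-\lambda_3-t,\ \alpha_2+\sum_{i\ge4}\alpha_i,\ \alpha_3+\lambda_3,0,\ldots,0)$; and for each $j\ge 4$ the row equals $(\lambda_1-\lambda_2-t-\alpha,\ \alpha_2+\lambda_2-\lambda_j,\ \alpha_3+\alpha,\ldots)$ with $\alpha_j+\lambda_j$ on the diagonal and $\alpha_i$ in the remaining columns $i\ge4$. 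This generalizes \eqref{e2} and \eqref{e3} in the obvious way, and the routine verification that these products are correct is the bookkeeping part of the argument.

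I would then check nonnegativity family by family. Writing $\beta:=\alpha_2+\sum_{i\ge4}\alpha_i=t-\alpha_3$, every entry is governed by the inequalities $-\lambda_i\le\alpha_i\ (i=2,\ldots,n)$, $t\le\lambda_1$, and $-\alpha_3\le\alpha\le\lambda_1-\lambda_2-t$: the first column is nonnegative because $t\le\lambda_1$, $-\lambda_3\ge0$, and $\alpha\le\lambda_1-\lambda_2-t$; the diagonal entries $\alpha_i+\lambda_i$ and $\alpha_2+\lambda_2$ are nonnegative by $\alpha_i\ge-\lambda_i$; the column $2$ entries $\alpha_2+\lambda_2-\lambda_j$ are nonnegative since $\alpha_2+\lambda_2\ge0$ and $-\lambda_j\ge0$; the column $3$ entries $\alpha_3+\alpha$ are nonnegative by the lower bound on $\alpha$; and the off-diagonal $\alpha_i\ (i\ge4)$ are nonnegative. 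The only entries not immediately covered are the two occurrences of $\beta$ in column $2$ of rows $1$ and $3$, which force the extra requirement $\beta\ge0$; together with the feasibility of the $\alpha$-interval this is precisely $0\le\beta\le\lambda_1-\lambda_2$, a nonempty range because $\lambda_1\ge\lambda_2$.

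The hard part is the feasibility of all these constraints simultaneously, and this is where the hypothesis $\sum_i\lambda_i\ge0$ (and the standing Perron condition $\lambda_1\ge|\lambda_i|$) must be used. Taking $\alpha_i=-\lambda_i$ for $i\ge3$ makes $t=\beta-\lambda_3$ and turns the requirements into $\max\{0,\,S-\lambda_2\}\le\beta\le\min\{\lambda_1-\lambda_2,\ \lambda_1+\lambda_3\}$, where $S=\sum_{i\ge4}|\lambda_i|$. When $\lambda_2+\lambda_3\le0$ the right endpoint is $\lambda_1+\lambda_3\ge0$ and the inequality $S-\lambda_2\le\lambda_1+\lambda_3$ is exactly $S\le\lambda_1+\lambda_2+\lambda_3$, which follows from $\sum_i\lambda_i\ge0$; so the interval is nonempty and a valid $\beta$ (hence a valid $\alpha_2$ and $\alpha$) exists. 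When $\lambda_2+\lambda_3>0$ the binding endpoint is $\lambda_1-\lambda_2$ and nonemptiness reduces to $S\le\lambda_1$, which is not implied by $\sum_i\lambda_i\ge0$ alone; here one must invoke the additional spectral restriction $\sum_{i\ge4}|\lambda_i|\le\lambda_1$, exactly the type of side condition already present in Theorem 2.4 (the hypotheses $\alpha_4+\alpha_5\ge\lambda_2$, $\alpha_5<\lambda_2$). I expect this case distinction --- guaranteeing that the admissible window for $\beta$ does not collapse when $\lambda_2$ is large relative to the negative part --- to be the genuine obstacle, and I would isolate it as a lemma (or fold it into the hypotheses) before declaring the construction complete.
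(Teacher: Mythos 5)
Your construction is not the paper's, and the difference is exactly where your own feasibility analysis collapses. You freeze the pattern of \eqref{e2}--\eqref{e3} by loading \emph{all} of $\alpha_4,\dots,\alpha_n$ into row $2$ with the single free parameter $\alpha$. The paper's proof instead chooses a cut index $r$ adaptively: starting from $\alpha_n$ and moving left, $r$ is the first index with $\sum_{i=r}^{n}\alpha_i\ge\lambda_2$; only $\alpha_r,\dots,\alpha_n$ go into row $2$, the remaining $\alpha_3,\dots,\alpha_{r-1}$ go into row $1$, row $2$ carries a whole family of free parameters $\beta_3,\dots,\beta_{r-1}$ in columns $3,\dots,r-1$ (your $\alpha$ is the special case $r=4$, where the family reduces to one parameter), and $L$ has ones in column $2$ only in rows $2,r,\dots,n$. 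The point of the cut is load balancing: $\lambda_2$ absorbs only a tail of negativity of size just over $\lambda_2$, the Perron root absorbs the rest through row $1$, and the entry $c_{21}=\lambda_1-\lambda_2-t-(\beta_3+\cdots+\beta_{r-1})$ is repaired by driving the $\beta_j$ down toward $-\alpha_j$ (note it is the \emph{sum} of the $\beta_j$ that $c_{21}$ constrains; the paper's per-index upper bound should be read that way). With $\alpha_i=-\lambda_i$ $(i\ge3)$ and $\alpha_2=-\lambda_2$ the binding requirement becomes $\lambda_1\ge\sum_{i=r}^{n}|\lambda_i|$, and by near-minimality of the tail this sum is below $\lambda_2+|\lambda_r|$ --- strictly weaker than your requirement $\lambda_1\ge S=\sum_{i\ge4}|\lambda_i|$. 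A concrete separating instance is $\sigma=\{5,4,-2,-2,-2,-2\}$: in your scheme nonnegativity of $c_{22}$ forces $\alpha_2\ge-4$ while nonnegativity of $c_{21}$ forces $\alpha_2\le-5$, so it is provably infeasible; the paper's choice $r=5$, $\alpha_3=\cdots=\alpha_6=2$, $\alpha_2=-4$, $\beta_3=\beta_4=-2$ yields the nonnegative realization $C=\left[\begin{smallmatrix}1&0&2&2&0&0\\ 1&0&0&0&2&2\\ 3&0&0&2&0&0\\ 3&0&2&0&0&0\\ 1&2&0&0&0&2\\ 1&2&0&0&2&0\end{smallmatrix}\right]$. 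So the missing idea, relative to the paper, is the split index $r$ with the $\beta_j$'s; your proposed extra hypothesis $S\le\lambda_1$ is also the wrong fix, since this example violates it yet is realizable by the paper's construction.

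That said, the obstacle you flag at the end is genuine and is not removed by the paper either: even with the $r$-split, $c_{21}\ge0$ requires roughly $\lambda_1\ge\lambda_2+|\lambda_r|$, which nonnegative summation does not supply, and the paper silently assumes it (conceding only in a later remark that lists such as $\{6,1,1,1,1,-4,-4\}$ are ``not solvable by our method''). Indeed the theorem as stated cannot be proved at all: $\{3,3,-2,-2,-2\}$ has zero sum and satisfies \eqref{JLL}, but violates the Laffey--Meehan condition $s_2^2\le 4s_4$ for trace-zero $5\times5$ nonnegative matrices ($900>840$) \cite{Meehan}, and there even the paper's construction is infeasible ($r=4$ gives $\sum_{i=4}^{5}|\lambda_i|=4>3=\lambda_1$). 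So your instinct that hypotheses must be added is sound --- in that respect you are more careful than the paper --- but as a reconstruction of the paper's argument your proposal misses its one substantive device, and consequently proves a strictly weaker statement than the paper's construction delivers.
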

{\bf Proof}. The proof is provided as the proof of previous  Theorem.  In this case we consider the matrices $A$ and $L$ respectively as
\[
A= \left[\begin{array}{cccccccc}
\lambda_1 & \alpha_2 + (\alpha_r + \cdots +\alpha_n)  & \alpha_3 & \cdots & \alpha_{r-1} & 0 & \cdots & 0 \\ 
0 & \lambda_2 & \beta_3 & \cdots & \beta_{r-1} & \alpha_r & \cdots & \alpha_n \\ 
0 & 0 & \lambda_3 & \cdots & 0 & 0 & \cdots & 0 \\ 
\vdots & \vdots & \vdots & \ddots & \vdots & \vdots & \cdots & \vdots \\ 
0 & 0 & 0 & 0 & \lambda_{r-1} & 0 & 0 & 0 \\ 
0 & 0 & 0 & 0 & 0 & \lambda_r & 0 & 0 \\ 
\vdots & \vdots & \vdots & \vdots & \vdots & \vdots & \ddots & \vdots \\ 
0 & 0 & 0 & \cdots & 0 & 0 & \cdots & \lambda_n
\end{array} 
\right],
\]
\[
L=\left[ \begin{array}{cccccccc}
1 & 0 & 0 & \cdots & 0 & 0 & \cdots & 0 \\ 
1 & 1 & 0 & \cdots & 0 & 0 & \cdots & 0 \\ 
1 & 0 & 1 & \cdots & 0 & 0 & \cdots & 0 \\ 
\vdots & \vdots & \vdots & \ddots & \vdots & \vdots & \cdots & \vdots \\ 
1 & 0 & 0 & \cdots & 1 & 0 & \cdots & 0 \\ 
1 & 1 & 0 & \cdots & 0 & 1 & \cdots & 0 \\ 
\vdots & \vdots & \vdots & \vdots & \vdots & \vdots & \ddots & \vdots \\ 
1 & 1 & 0 & \cdots & 0 & 0 & \cdots & 1
\end{array}  \right],
\]
where the values of the second row of the  matrix $A$, $ \alpha_i , i=3,\cdots,n$, must be at least equal to the values of the $-\lambda_i, i=3,\cdots, n$ values in their columns respectively, and the value of $\alpha_r$ is the last value in the second row  with starting of the last element on this row, such that $\sum^{n}_{i=r} \alpha_i \ge \lambda_2$.

Then we have 
\begin{align*}
C &=LAL^{-1}\\
&=\left[ \begin{array}{lllllllll}
c_{11} & c_{12} & \alpha_{3} & \alpha_{4} & \cdots & \alpha_{r-1} & 0 & \cdots & 0 \\ 
c_{21}& \alpha_2  + \lambda_2 & \alpha_{3}+\beta_{3} & \alpha_{4}+\beta_{4} & \cdots & \alpha_{r-1}+\beta_{r-1} & \alpha_r & \cdots & \alpha_n \\ 
c_{31} & c_{32} & \lambda_{3}+\alpha_{3} & \alpha_{4}+\beta_{4} & \cdots & \alpha_{r-1}+\beta_{r-1} & 0 & \cdots & 0 \\ 
c_{4, 1} & c_{42} & \alpha_{3}+\beta_{3} & \lambda_{4}+\alpha_{4} & \cdots & \alpha_{r-1}+\beta_{r-1} & 0 & \cdots & 0 \\ 
\vdots & \vdots & \vdots & \vdots & \ddots & \vdots & \vdots & \cdots & \vdots \\ 
c_{r-1, 1} &c_{r-1,2} & \alpha_{3}+\beta_{3} & \alpha_{4}+\beta_{4} & \cdots & \lambda_{r-1}+\alpha_{r-1} & 0 & \cdots & 0 \\ 
c_{r1} &c_{r2} & \alpha_{3}+\beta_{3} & \alpha_{4}+\beta_{4} & \cdots & \alpha_{r-1}+\beta_{r-1} & \lambda_r+\alpha_r & \cdots & \alpha_n \\ 
\vdots & \vdots & \vdots & \vdots & \vdots & \vdots & \vdots & \ddots & \vdots \\ 
c_{n1} & c_{n2} & \alpha_{3}+\beta_{3} & \alpha_{4}+\beta_{4} & \cdots & \alpha_{r-1}+\beta_{r-1} & 0 & \cdots & \lambda_n+\alpha_n
\end{array}  \right] 
\end{align*}
where
\begin{align*}
c_{11}=&\lambda_1 -t ,\\
 c_{31} =&c_{41} = \cdots = c_{r-1, 1}=\lambda_1-\lambda_2-t,\\
 c_{21}=&c_{r1}= \cdots = c_{n1}= \lambda_1-\lambda_2-t-(\beta_{3}+ \cdots + \beta_{r-1}),\\
c_{12} =& c_{32} = \alpha_2  + \alpha_r+ \cdots + \alpha_n,\\
c_{i2}=&\alpha_2  + \lambda_2 -\lambda_i, \qquad i=4 \cdots n,
\end{align*}
 that $t = \sum_{i=2}^{n} \alpha_i$. The matrix $C$ is nonnegative matrix if hold the following conditions 
  \begin{align*}
-\lambda_i & \le \alpha_i, \qquad i= 2,\cdots ,n\nonumber\\
t & \le \lambda_1 \nonumber \\
-\alpha_i & \le \beta_{i} \le \lambda_1-\lambda_2-t, \qquad i= 3,\cdots ,r-1,
\end{align*}  
and then  $\sigma$ is its spectrum. 
 \begin{example}
Let $\sigma =\{19,1,-5,-5,-3,-3,-2,-2 \}$. This spectrum is chosen  form \cite{soto2018} and by our method 
we again solve this problem.  We select two matrices $A$ and $L$ as:
\[
A=\left[ \begin {array}{cccccccc} 19&1&5&5&3&3&2&0\\\noalign{\medskip}0
&1&0&0&0&0&a_{{27}}&2\\\noalign{\medskip}0&0&-5&0&0&0&0&0
\\\noalign{\medskip}0&0&0&-5&0&0&0&0\\\noalign{\medskip}0&0&0&0&-3&0&0
&0\\\noalign{\medskip}0&0&0&0&0&-3&0&0\\\noalign{\medskip}0&0&0&0&0&0&
-2&0\\\noalign{\medskip}0&0&0&0&0&0&0&-2\end {array} \right],
\]
\[
L=\left[ \begin {array}{cccccccc} 1&0&0&0&0&0&0&0\\\noalign{\medskip}1&
1&0&0&0&0&0&0\\\noalign{\medskip}1&0&1&0&0&0&0&0\\\noalign{\medskip}1&0
&0&1&0&0&0&0\\\noalign{\medskip}1&0&0&0&1&0&0&0\\\noalign{\medskip}1&0
&0&0&0&1&0&0\\\noalign{\medskip}1&0&0&0&0&0&1&0\\\noalign{\medskip}1&1
&0&0&0&0&0&1\end {array} \right].
\]
Then the matrix $C=LAL^{-1}$ is computed as follows:
\[
C= \left[ \begin {array}{cccccccc} 0&1&5&5&3&3&2&0\\\noalign{\medskip}-1
-a_{{27}}&0&5&5&3&3&2+a_{{27}}&2\\\noalign{\medskip}5&1&0&5&3&3&2&0
\\\noalign{\medskip}5&1&5&0&3&3&2&0\\\noalign{\medskip}3&1&5&5&0&3&2&0
\\\noalign{\medskip}3&1&5&5&3&0&2&0\\\noalign{\medskip}2&1&5&5&3&3&0&0
\\\noalign{\medskip}-1-a_{{27}}&2&5&5&3&3&2+a_{{27}}&0\end {array}
 \right].
\]
If $-2 \leq a_{27}\leq -1$, then the matrix $C$ is nonnegative and has spectrum $\sigma$. 
\end{example}

\subsubsection{NIEP for spectrum three positive eigenvalues}
In this section we study the NIEP for three positive eigenvalues and more than or equal three negative eigenvalues.
\begin{theorem}
Consider spectrum $\sigma = \{\lambda_1, \lambda_2, \lambda_3, \lambda_4, \lambda_5, \lambda_6  \}$
such that $\lambda_1 \ge \lambda_2 \ge \lambda_3> 0 > \lambda_4 \ge \lambda_5 \ge \lambda_6$ with nonnegative summation. 
Then there exist a set of  nonnegative $6 \times 6$ matrices that $\sigma $ is its spectrum.
\end{theorem}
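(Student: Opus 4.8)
The plan is to extend the construction of Theorems~2.2--2.4 to three positive eigenvalues by distributing the negative eigenvalues $\lambda_4,\lambda_5,\lambda_6$ between the two non-Perron positive eigenvalues $\lambda_2$ and $\lambda_3$. As before, the spectrum is realized by a similarity $C=LAL^{-1}$, where $A$ is upper triangular with the prescribed eigenvalues on its diagonal and $L$ is unit lower triangular; since $L^{-1}$ is again unit lower triangular, $C$ is similar to $A$ and hence has spectrum $\sigma$, so the whole problem reduces to forcing $C\ge 0$ entrywise.

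First I would set up $A$ as a $6\times 6$ upper triangular matrix with diagonal $(\lambda_1,\lambda_2,\lambda_3,\lambda_4,\lambda_5,\lambda_6)$. The strictly upper part of the first three rows carries the free parameters: row~$1$ collects the Perron couplings $\alpha_i$ together with the bookkeeping combinations analogous to those producing \eqref{e2} and \eqref{e3}, while rows~$2$ and~$3$ carry parameters $\alpha,\beta_i$ that feed negative mass directly into $\lambda_2$ and $\lambda_3$; rows~$4$--$6$ remain purely diagonal. Correspondingly, $L$ has its first column equal to the all-ones vector (the Perron coupling used throughout), together with ones in column~$2$ in those rows whose negative eigenvalue is assigned to $\lambda_2$ and ones in column~$3$ in the rows assigned to $\lambda_3$.

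Next I would compute $C=LAL^{-1}$ and read off its entries exactly as in \eqref{e2}--\eqref{e3}. Writing $t=\sum_{i=2}^{6}\alpha_i$, the diagonal entries take the forms $\lambda_1-t$ and $\lambda_i+\alpha_i$, the first-column entries take the form $\lambda_1-\lambda_i-t-(\text{coupling terms})$, and the remaining entries are nonnegative combinations of the $\alpha$'s and $\beta$'s. Nonnegativity then reduces to the familiar list of linear inequalities $\alpha_i\ge-\lambda_i$, $t\le\lambda_1$, the coupling parameters squeezed between $-\alpha_i$ and $\lambda_1-\lambda_2-t$ (with the analogous bound for the $\lambda_3$-row), together with two sufficient-mass requirements: the parameters routed to $\lambda_2$ must sum to at least $\lambda_2$, and those routed to $\lambda_3$ to at least $\lambda_3$.

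The main obstacle is to show that these inequalities are jointly feasible using only the hypothesis $\sum_i\lambda_i\ge 0$, together with the standing Perron bound $\lambda_1\ge|\lambda_i|$. Concretely, after setting $\alpha_2=-\lambda_2$ and $\alpha_3=-\lambda_3$ one has $t=-\lambda_2-\lambda_3+\sum_{i=4}^{6}\alpha_i$, so the constraint $t\le\lambda_1$ becomes $\sum_{i=4}^{6}\alpha_i\le\lambda_1+\lambda_2+\lambda_3$; the task is to partition $\{|\lambda_4|,|\lambda_5|,|\lambda_6|\}$ into two nonempty groups, feeding one to $\lambda_2$ and one to $\lambda_3$ and boosting an individual parameter above $|\lambda_i|$ only when a group falls short of its threshold, so that the total stays within this bound. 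I expect this to be the delicate step, resolved by a short case analysis on how the $|\lambda_i|$ compare with $\lambda_2$ and $\lambda_3$: assigning the most negative eigenvalue to the larger threshold $\lambda_2$, one checks that either both groups already meet their thresholds (so the total equals $\sum_{i=4}^{6}|\lambda_i|\le\lambda_1+\lambda_2+\lambda_3$) or a genuine shortfall occurs, in which case the Perron bound $|\lambda_i|\le\lambda_1$ absorbs the boosting excess. The zero-summation configuration, in which $\alpha_i=-\lambda_i$ for all $i$ and every diagonal entry of $C$ vanishes, serves as the extremal case against which the general bound is measured.
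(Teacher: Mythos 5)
Your construction is the same one the paper uses: its proof of this theorem exhibits exactly such a pair $(A,L)$ --- $\sigma$ on the diagonal of an upper triangular $A$, the negative eigenvalues absorbed one per positive row ($\alpha_4$ in the Perron row, $\alpha_5$ in row $2$, $\alpha_6$ in row $3$, with couplings $a_{24},a_{34},a_{35}$), and a $0$--$1$ unit lower triangular $L$ --- then computes $C=LAL^{-1}$ and records the sufficient conditions $-\lambda_i\le\alpha_i$, $t\le\lambda_1$, $-\alpha_5\le a_{35}\le\alpha_2+\lambda_2-\lambda_3$, $-\alpha_4\le a_{24}\le\lambda_1-\lambda_2-t$ and $-\alpha_4\le a_{24}+a_{34}\le\lambda_1-\lambda_2-t$, and stops. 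Up to your (reasonable) variant of splitting $\lambda_4,\lambda_5,\lambda_6$ between rows $2$ and $3$ rather than letting the Perron row take $\lambda_4$, your setup, your inequality list, and your similarity argument all match the paper.

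The genuine gap is in your last paragraph, and it is precisely the step the paper itself silently omits: showing the inequalities are \emph{jointly feasible} from $\sum_i\lambda_i\ge0$ and $\lambda_1\ge|\lambda_i|$ alone. Your reduction keeps only $t\le\lambda_1$ and the two group-mass thresholds, but the coupling windows you listed earlier impose further constraints that do not follow from nonnegative summation: nonemptiness of $[-\alpha_i,\ \lambda_1-\lambda_2-t]$ requires $\lambda_1-\lambda_2\ge t-\alpha_i$, and in the paper's assignment (minimal choices $\alpha_j=-\lambda_j$) this forces $\lambda_1+\lambda_3+\lambda_5+\lambda_6\ge0$, which fails for instance for $\sigma=\{2,\,1.99,\,0.01,\,-0.5,\,-1.5,\,-1.99\}$ even though $\sum_i\lambda_i=0.01\ge0$ and the Perron bound holds; no admissible choice of parameters makes that scheme's $C$ nonnegative there. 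Your two-group routing (sending $|\lambda_6|$ to $\lambda_2$) may well rescue such lists, but that is exactly what the deferred ``short case analysis'' would have to establish, window conditions of the form $\lambda_1-\lambda_2-t\ge-\alpha_i$ included, and it is genuinely delicate rather than routine --- the paper's own Remark following Theorem 2.8 concedes the method fails for some lists satisfying the standing hypotheses. So your instinct that feasibility is the crux is correct (and goes beyond what the paper proves, since the paper only states the sufficient conditions without verifying they can be met), but as written the proposal asserts rather than proves the theorem's existence claim, and the asserted reduction to $t\le\lambda_1$ plus mass thresholds is demonstrably too coarse.
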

{\bf Proof}. 
We consider the matrix $A$ as follow
\[
A=\left[ \begin {array}{cccccc} \lambda_{{1}}&\alpha _{2}+\alpha _{5}+
\alpha _{6}+\alpha _{3}&0&\alpha _{4}&0&0\\ 
0&\lambda_{{2}}&\alpha_{6}+\alpha _{3}& a_{24} &\alpha _{5}&0\\ 
0&0&\lambda_{{3}}& a_{34} &  a_{ 35 } &\alpha _{6}\\
0&0&0&\lambda_{4}&0&0\\ 
0&0&0&0&\lambda_{5}&0\\ 
0&0&0&0&0&\lambda_{6}
\end {array} \right].
\]
The unit lower triangular matrix that solve the problem, we can find as 
\[
L=\left[ \begin {array}{cccccc} 
1&0&0&0&0&0\\ 
1&1&0&0&0&0\\ 
1&1&1&0&0&0\\ 
1&0&0&1&0&0\\
1&1&0&0&1&0\\ 
1&1&1&0&0&1
\end {array} \right]. 
\] 
In this case, the following matrix will be obtained with similarity transformations of the unit lower triangular matrix
\begin{align}
 C=&LAL^{-1}\nonumber \\=&
\begin{small}
\left[ \begin {array}{llllll} 
\lambda_{1}- t &t- \alpha _{4}&0&\alpha _{4}&0&0\\
\lambda_{1}-\lambda_{2}-t -a_{24}&\alpha_{2}+\lambda_{2}&\alpha_{6}+\alpha_{3}&\alpha_{4}+ a_{ 24}&\alpha_{5}&0\\ 
\lambda_{1} -\lambda_{2}-t-a_{24} - a_{34} &\alpha_{2}+\lambda_{2}-\lambda_{3}-a_{35} &\alpha_{3}+\lambda_{
3}&\alpha_{4}+a_{ 24 } + a_{34} &\alpha _{5}+a_{ 35} &\alpha_{6}\\
\lambda_{1} -\lambda_{4} -t&t- \alpha_4 &0&\alpha _{4}+\lambda_{{4}}&0&0\\ 
\lambda_{1} -\lambda_{2}-t-a_{24} &\alpha _{2}+\lambda_{2}-\lambda_{5}&\alpha _{6}+\alpha _{3}&\alpha _{4}+a_{24} &\alpha _{5}+\lambda_{5}&0\\
\lambda_{1}- \lambda_{2}-t -a_{24} - a_{ 34} &\alpha _{2}+\lambda_{2}-\lambda_{3}- a_{ 35} &\alpha_{3}+\lambda_{3}-\lambda_{6}&
\alpha _{4}+ a_{ 24 } + a_{ 34} &\alpha _{5}+a_{ 35 } &\alpha _{6}+
\lambda_{6}\end {array} \right], 
\end{small}
\end{align}
that $t = \sum_{i=2}^{n} \alpha_i$.
This matrix is nonnegative if satisfy the conditions
\begin{align*} 
-\lambda_i & \le \alpha_i, \qquad i= 2,\cdots ,6 ,\nonumber\\
t & \le \lambda_1, \nonumber \\
-\alpha_5 & \le a_{35} \le \alpha_2+\lambda_2-\lambda_3,\\
-\alpha_4 & \le a_{24}  \le \lambda_1-\lambda_2- t, \nonumber\\
-\alpha_4 & \le a_{24}+a_{34} \le \lambda_1-\lambda_2- t,\nonumber
\end{align*} 
 and by similarity 
 $\sigma$ is its spectrum.
\begin{theorem}
Consider spectrum $\sigma = \{\lambda_1, \lambda_2, \cdots,  \lambda_n  \}$
such that $\lambda_1 \ge \lambda_2 \ge \lambda_3> 0 > \lambda_4 \ge \cdots \ge \lambda_n$ with nonnegative summation.
Then there exist a set of  nonnegative $n \times n$ matrix that $\sigma $ is its spectrum.
\end{theorem}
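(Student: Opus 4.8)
The plan is to imitate, in the $n\times n$ setting, the construction just used for the $6\times 6$ case, now with three positive ``reservoirs'' $\lambda_1,\lambda_2,\lambda_3$. I would take $A$ to be the upper triangular matrix carrying $\lambda_1,\dots,\lambda_n$ on its diagonal, with only rows $1,2,3$ nonzero off the diagonal: row $1$ feeds $\lambda_2,\lambda_3$ and the negative eigenvalues through combinations of parameters $\alpha_i$, while rows $2$ and $3$ carry transfer parameters $\beta_i$ and $a_{ij}$ that route negative mass into the $\lambda_2$- and $\lambda_3$-reservoirs. For $L$ I would take the unit lower triangular matrix whose first column is all ones and in which each negative-eigenvalue row carries extra ones in a prefix of $\{1,2,3\}$: a negative eigenvalue attached to $\lambda_1$ alone gets a single extra $1$ in column $1$, one attached to $\lambda_1,\lambda_2$ gets ones in columns $1,2$, and one attached to $\lambda_1,\lambda_2,\lambda_3$ gets ones in columns $1,2,3$. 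This is precisely the three-level refinement of the linking pattern already used for one and two positive eigenvalues.

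I would then form $C=LAL^{-1}$ and read off its entries. Because $A$ is triangular with the prescribed diagonal, $C$ is similar to $A$ and hence has spectrum $\sigma$; this uses only the similarity remark of the introduction and is automatic. The real content is the list of nonnegativity inequalities for the entries of $C$, which have the same shape as in the $6\times 6$ theorem: $\alpha_i\ge-\lambda_i$ for every $i$ (so that the diagonal entries $\alpha_i+\lambda_i$ and $\lambda_1-t$ are nonnegative, where $t=\sum_{i=2}^n\alpha_i\le\lambda_1$), together with two-sided bounds of the form $-\alpha_i\le a_{ij}\le\lambda_1-\lambda_2-t$ and $-\alpha_i\le\beta_i\le\alpha_2+\lambda_2-\lambda_3$ on the transfer parameters, which control the first three columns and the ``covering'' entries such as $(2,1),(3,1),(3,2)$.

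The heart of the proof, and the step I expect to be the main obstacle, is to show that these inequalities can be satisfied simultaneously using only $\sum_i\lambda_i\ge0$ and the Perron bound $\lambda_1\ge|\lambda_i|$. My first move would be to set $\alpha_i=-\lambda_i=|\lambda_i|$ at the negative indices $i\ge4$, so that those diagonal entries vanish; then $t\le\lambda_1$ reduces essentially to $\sum_{i\ge4}|\lambda_i|\le\lambda_1+\lambda_2+\lambda_3$, i.e. to nonnegative summation, and the $(1,2)$-type entries $t-\alpha_i$ reduce to Perron-type inequalities. The genuinely delicate point is that the transfer ranges $-\alpha_i\le\beta_i\le\alpha_2+\lambda_2-\lambda_3$ (and their analogues for the $a_{ij}$) are nonempty only if enough negative mass is routed through $\lambda_2$ and $\lambda_3$, while at the same time routing mass through $\lambda_2,\lambda_3$ forces $\alpha_2,\alpha_3$ away from their minima and so eats into the budget $t\le\lambda_1$. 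Thus the crux is a balancing/assignment argument: one must partition the negative eigenvalues among the three prefixes $\{1\},\{1,2\},\{1,2,3\}$ so that every reservoir carrying transfers is adequately fed while the first column stays nonnegative. I would attempt this by assigning the negative $\lambda_i$ greedily, largest modulus first, to the deepest reservoir still needing mass, and then verify via the total-mass inequality that no reservoir is left infeasible; carrying this bookkeeping out explicitly, and checking that the resulting parameter choice makes every entry of $C$ nonnegative, is the one step that is more than a routine transcription of the $6\times6$ case.
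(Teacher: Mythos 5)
Your construction is a faithful reconstruction of what the paper intends: the paper's entire proof of this theorem is one sentence, deferring to the $6\times 6$ case (its Theorem on six eigenvalues with three positive ones) and to the general construction ``described in the next subsection'' for $k$ positive eigenvalues, which uses exactly your data --- an upper triangular $A$ with nonzero off-diagonal entries confined to the rows of the positive eigenvalues (the $\alpha_i$, $\beta_{ij}$ and carry-over terms $a_{s,s+1}=-(\lambda_s-\sum\alpha_i)$) and a unit lower triangular $L$ whose negative-eigenvalue rows carry ones in a prefix of the positive-eigenvalue columns. So in approach you and the paper coincide, including the shape of the inequality system ($\alpha_i\ge-\lambda_i$, $t\le\lambda_1$, and two-sided bounds on the transfer parameters).

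However, the step you yourself flag as the crux --- proving that the inequalities are \emph{simultaneously} satisfiable from nonnegative summation and the Perron bound alone, via your greedy assignment of negative eigenvalues to the reservoirs $\lambda_1,\lambda_2,\lambda_3$ --- is a genuine gap, and it is not one you could close, because the claim is false at that level of generality: the paper's own Remark~2.10 concedes that the method fails for lists such as $\sigma=\{6,1,1,1,1,-4,-4\}$, which satisfies nonnegative summation, precisely because the positive eigenvalue that must absorb the residual negative mass (the quantity $\lambda_s-\sum\alpha_i$ in the carry-over chain) can be too small even though the total budget $t\le\lambda_1$ looks fine. In other words, your worry that ``routing mass through $\lambda_2,\lambda_3$ eats into the budget'' is exactly where the argument breaks: the greedy assignment can leave a reservoir infeasible, and no bookkeeping rescues it without an additional hypothesis of the type stated in that remark (the covering positive eigenvalue must be at least the modulus of the relevant negative eigenvalue). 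To be fair, the paper never carries out this verification either --- its proof is an appeal to ``continuing the process'' --- so your proposal reproduces the paper's argument together with its unrepaired hole; a correct version of the theorem needs either the extra sufficiency condition made explicit in the hypotheses, or a genuinely different realization technique for the remaining lists.
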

{\bf Proof}. The method of proof this Theorem is continue of process of  previous Theorem and in general in the next subsection, we describe this process.

\subsubsection{The spectrum with $k$ positive eigenvalues}
In this case we study NIEP with $k$ positive eigenvalues and more than or equal $k$ non-positive eigenvalues. One of the most important points that we have to consider is where non-zero values elements  lie in the  unit lower triangular matrix and another important point is the detection and the amount  and locations of nonzero value of the upper triangular matrix $A$ which in  the main  its diagonal located the elements of $\sigma$.
\begin{theorem}
Consider spectrum $\sigma = \{\lambda_1, \lambda_2, \cdots, \lambda_k, \cdots, \lambda_n  \}$
such that $\lambda_1 \ge \lambda_2 \ge \cdots \ge  \lambda_k> 0 \ge \lambda_{k+1} \ge \cdots \ge \lambda_n$ with nonnegative summation.
Then there exist a set of  nonnegative $n \times n$ matrix that $\sigma $ is its spectrum.
\end{theorem}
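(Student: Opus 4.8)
The plan is to generalize the constructions of the preceding theorems, extending the $k=1,2,3$ cases to arbitrary $k$ with $k \le \frac{n}{2}$. First I would build an $n \times n$ upper triangular matrix $A$ carrying $\lambda_1, \ldots, \lambda_n$ on its diagonal, so that $A$ automatically has spectrum $\sigma$. The entries strictly above the diagonal are kept as free parameters: a family $\alpha_2, \ldots, \alpha_n$ governing how the Perron mass is redistributed, together with auxiliary parameters $\beta_i$ and $a_{ij}$ placed in rows $2, \ldots, k$ to mediate the interaction among the $k$ positive eigenvalues. The first row of $A$ collects the terms needed to channel the Perron contribution $\lambda_1$ into the later rows, exactly as in \eqref{e1}, \eqref{e2} and \eqref{e3}.

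Next I would choose the unit lower triangular matrix $L$ according to the structural rule visible in the earlier cases: every row carries a $1$ in the first column, and each positive-eigenvalue index $j \in \{2, \ldots, k\}$ contributes a $1$ in column $j$ of certain non-positive-eigenvalue rows, thereby \emph{pairing} $\lambda_j$ with enough negative eigenvalues to absorb it. Here the hypothesis $k \le \frac{n}{2}$ is essential: it guarantees at least as many non-positive eigenvalues as positive ones, so the pairing can be carried out. Forming $C = LAL^{-1}$ then produces a matrix similar to $A$, hence with spectrum $\sigma$ by the similarity principle recalled in the introduction.

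The third step is to read off the nonnegativity conditions. The diagonal of $C$ consists of the single entry $\lambda_1 - t$ in position $(1,1)$, with $t = \sum_{i=2}^n \alpha_i$, and the entries $\alpha_i + \lambda_i$ elsewhere; these are nonnegative precisely when $t \le \lambda_1$ and $\alpha_i \ge -\lambda_i$. The off-diagonal entries fall into two types: those that are manifestly nonnegative sums of the $\alpha_i$ and the shifted quantities $\lambda_j + \alpha_j$, and the cascading first-column entries of the form $\lambda_1 - \lambda_2 - t - (a_{24} + a_{34} + \cdots)$, which are controlled by nested bounds $-\alpha_i \le \beta_i \le \lambda_1 - \lambda_2 - t$ on the auxiliary parameters.

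Finally I would verify feasibility. Setting $\alpha_i = -\lambda_i$ for the non-positive eigenvalues gives the minimal value $t_{\min} = -\sum_{i=2}^n \lambda_i = \lambda_1 - S$, where $S = \sum_{i=1}^n \lambda_i$; the nonnegative-summation hypothesis $S \ge 0$ is exactly what forces $t_{\min} \le \lambda_1$, so the constraint $t \le \lambda_1$ can be met, and the remaining bounds then describe nonempty intervals. I expect the principal obstacle to be the bookkeeping of the general pattern itself: writing down, for arbitrary $k$ and $n$, the precise placement of the parameters $\beta_i$ and $a_{ij}$ in $A$ and of the off-diagonal $1$'s in $L$, and then checking that every cascading entry in columns $2, \ldots, k$ stays nonnegative simultaneously. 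This amounts to verifying that the nested upper-bound intervals are never empty rather than resolving any single hard inequality.
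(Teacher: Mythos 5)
Your plan retraces the paper's own proof almost exactly: the paper, too, only gives a placement recipe --- put $\sigma$ on the diagonal of an upper triangular $A$, insert parameters $\alpha_i$, $\beta_{i,j}$ and deficit entries, take a zero--one unit lower triangular $L$ with ones at the transposed positions of the $\alpha_i$'s, form $C=LAL^{-1}$, and list the conditions $\alpha_i\ge-\lambda_i$, $t\le\lambda_1$, $-\alpha_i\le\beta_i\le\lambda_1-\lambda_2-t$ --- and it never verifies that this inequality system is solvable. One structural nuance you blur: in the general case the paper does not channel everything through the first row as in \eqref{e1}; it works greedily from $\lambda_n$ upward, letting $\lambda_k$ absorb a trailing block of negative eigenvalues via $\alpha_r,\dots,\alpha_n$, passing the deficit $-(\lambda_k-\sum_{i=r}^n\alpha_i)$ to $\lambda_{k-1}$ through the superdiagonal entry $a_{k-1,k}$, and so on up the chain of positive eigenvalues; this deficit cascade, not a mere pairing count enabled by $k\le\frac n2$, is what generates the actual constraints.

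The genuine gap is your final feasibility step, and it cannot be repaired by bookkeeping, because the statement as written is false. The hypothesis $S\ge 0$ only settles the single constraint $t\le\lambda_1$; it does not make the nested intervals nonempty. Take $\sigma=\{3,3,-2,-2,-2\}$: here $k=2\le\frac n2$, the sum is zero, and the necessary conditions \eqref{JLL} all hold, yet $s_2^2=900>840=4s_4$ violates the Laffey--Meehan inequality for trace-zero $5\times5$ nonnegative matrices (the paper's own reference \cite{6M}), so $\sigma$ is not realizable by any construction. Concretely, in the paper's two-positive-eigenvalue scheme \eqref{e3} the conditions force $\alpha_4,\alpha_5\ge 2$ and $\alpha_2\ge-\lambda_2=-3$, while nonnegativity of the cascading entry $\lambda_1-\lambda_2-t-\alpha$ together with $\alpha\ge-\alpha_3$ forces $\alpha_2+\alpha_4+\alpha_5\le\lambda_1-\lambda_2=0$, i.e.\ $\alpha_2\le-4$: the system is empty. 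The paper tacitly concedes this in the remark immediately following the theorem, which admits that lists with nonnegative sum (e.g.\ $\{6,1,1,1,1,-4,-4\}$) may not be solvable by the method and that an extra domination condition between the positive eigenvalues used in the cascade and the smallest negative eigenvalues is needed. So your proposal reproduces the paper's argument together with its defect; where you assert that only nonemptiness of nested intervals remains to be checked, that check genuinely fails under the stated hypotheses, and a correct theorem would need strengthened assumptions of the kind the paper's remark hints at.
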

{\bf Proof.}
For construction of matrix $C$ in which is nonnegative and has eigenvalues $\sigma$, we explain that how to construct  the upper triangular matrix $A$ and  unit lower triangular matrix $L$.  

For construction of the matrix $A$ we put $\lambda_1, \lambda_2, \cdots , \lambda_n$ on main diagonal of this matrix where $\lambda_1, \lambda_2, \cdots , \lambda_k$ are positive and $\lambda_{k+1}, \lambda_{k+2}, \cdots , \lambda_n$ are nonpositive eigenvalues and $k \le \frac{n}{2}$. 

Then we start with the last row and the last column of the matrix $A$ entry $\lambda_n$. We add the value $\alpha_n \ge - \lambda_n$ on the row $k$ and column $n$ of matrix $A$ and  if $ \lambda_k - \alpha_n \ge 0$, we add $\alpha_{n-1}\ge  - \lambda_{n-1}$ on the row $k$ and column $n-1$ and if again  $(\lambda_k - \alpha_n - \alpha_{n-1}) \ge 0$ we add the value $\alpha_{n-2} \ge \lambda_{n-2}$ on the row $k$ and column $n-2$ and continue this way until the first time $\lambda_k -\sum_{i=r}^n \alpha_i <0$, then we add the values $-(\lambda_k -\sum_{i=r}^n \alpha_i $) on the row $k$ and column $k-1$ and we add in row $k$ from column $k+1$ until $r-1$ the values  $\beta_{k,j}$ for $j=k+1, \cdots r-1$ respectively. The values $\alpha_i$ help to us that the effect of the negative eigenvalues in the matrix solution to be eliminated. We select the values $\beta_{k,j}$ to obtain at last nonnegative matrix.    After this step we use the second positive eigenvalue $\lambda_{k-1}$. We add the value $\alpha_{r-1}$ to the row $k-1$ and column $r-1$ and if $\lambda_{k-1} - \alpha_{r-1} >0$ we add the value $\alpha_{r-2} \ge \lambda_{r-2}$  on the row $k-1$ and column $r-2$ and similar the previous step continue. Assume that the index $m$ is the value such that for the first time $\lambda_{k-1} -\sum_{i=k+1}^{m-1} \alpha_i <0$.  In this step again the values  $\beta_{k-1,j}$ started from the column $k+1$ and finished $m-1$. We continue this method from $\lambda_n$ to $\lambda_{k-1}$.  Then we have 
\begin{tiny}
\begin{align*}
&\hspace{-1Cm}A=\\&\hspace{-1Cm}\left[\begin{array}{ccccccccccccccccc}
\lambda_1 & 0 & \cdots & &• & • & • & • & • & • && • & • && • & • & •  \\ 
0 & \lambda_2 & \cdots && • & • & • & • & • & • && • & •& & • & • & •  \\ 
• & • & \ddots & •& & • & • & • & • & • & • & • & • && • && •  \\ 
• & • &  \hspace{0.5Cm}\lambda_s&a_{s,s+1}&& • & 0 & \alpha_{k+1} & • & • & • & •& & •& & • & •  \\ 
• & • &  & \lambda_{s+1}&& • & 0 & \beta_{s+1,k+1} & • & • & • & •& & •& & • & •  \\ 
• & • & • &   \hspace{0.8Cm} \ddots && &\vdots & \vdots& • & • & • & • & •& & &• & • \\ 
• & • & • & • & \lambda_{m-2} &a_{k-2,k-1} &0&\beta_{k-2,k+1}  & \cdots & \alpha_{m-1} & • & • & •& & &• & •  \\ 
• & • & • & • & • & \lambda_{k-1} & a_{k-1,k} & \beta_{k-1,k+1} & \cdots&\beta_{k-1,m-1} & \alpha_m& \cdots& \alpha_{r-1} && • & • & •  \\ 
• & • & • & • & •&  • & \lambda_k & \beta_{k,k+1} & \cdots&\beta_{k,m-1} &\beta_{k,m} &\cdots & \beta_{k,r-1}& \alpha_r & \cdots & \alpha_{n-1} & \alpha_n\\ 
• & • & • & • & •&  • & • & \lambda_{k+1} & • & •& & • & &• & • & • & • \\ 
• & • & • & • & •&  • & • & • & \ddots & • & • & •& & • & • && • \\ 
• & • & • & • & • && • & • & •  & \lambda_{m-1} && • & • && • & • & • \\
• & • & • & • & • && • & • & • & • & \lambda_m & • && • & • & • & • \\ 
• & • & • & • & •& & • & • & • & • & • & \ddots && • & •&  • & • \\
• & • & • & • & •& & • & • & • & • && • & \lambda_{r-1} &  •& & • & • \\
• & • & • & • & • && • & • & • & • && • & • & \lambda_r & • & • & • \\ 
• & • & • & • & • && • & • & • & • && • & • & • & \ddots & • & • \\ 
• & • & • & • & • && • & • & • & • & &• & • & • & • & \lambda_{n-1} & • \\ 
• & • & • & • & • && • & • & • & • && • & • & • & • & • & \lambda_n
\end{array} 
\right],
\end{align*}
\end{tiny}
where 
\begin{align*}
a_{s,s+1}=&-(\lambda_{s-1}-\sum_{i=k+1}^{}\alpha_i),\\
\vdots&\\
a_{k-2,k-1}=&-(\lambda_{k-1}-\sum_{i=m}^{r}\alpha_i),\\
a_{k-1,k}=&-(\lambda_k-\sum_{i=r}^{n}\alpha_i)
\end{align*}
For construction matrix $L$, we act  the following procedure. Whereas $L$ is unit lower triangular matrix we set on the main diagonal of this matrix the number 1. For all $\alpha_j, j=n, \cdots,k+ 1$ that lies on entry $(i,j)$ of matrix $A$, we set the entry $(j,i)$ of the matrix $L$ again 1 and in this row the previous entries will be 1 or appropriate number( that help us for getting nonnegative matrix $C$ in product of three matrices $LAL^{-1}$)  until the column of the last positive $\lambda_i, i=k, k-1,\cdots$ that in the construction the matrix $A$ is used. For all entries of matrix $A$ in which  get positive value for $\lambda_k -\sum_{i=r}^n \alpha_i <0$, we set 1 on the transpose entry of these elements and again in its row the previous elements of these entries will be 1 or appropriate number  until   the column of the last positive that in the construction the matrix $A$ is used. For simplicity, we display these  entries of  matrix $L$ as 1. 
\begin{align*}
L=\begin{array}{l}
• \\ 
• \\ 
• \\ 
\\
s^{th}row \rightarrow\\ 
• \\ 
 \\ 
\\ 
k^{th}row \rightarrow\\ 
\\ 
• \\ • \\ 
\\ 
m^{th}row \rightarrow\\ 
• \\ 
 \\ 
r^{th}row \rightarrow\\
• \\ 
\\
 \\ 
n^{th}row\rightarrow
\end{array} &\left[
\begin{array}{ccccccccccccccccccc}
1 & • & • & • & • & • & • & • & • & • & • & • & • & • & • & • & • &&\\ 
• & 1 & • & • & • & • & • & • & • & • & • & • & • & • & • & • & • &&\\ 
• & • & \ddots & • & • & • & • & • & • & • & • & • & • & • & • & • & • &&\\ 
• & • & • & 1 & • & • & • & • & • & • & • & • & • & • & • & • & •&& \\ 
• & • & • & 1 & 1 & • & • & • & • & • & • & • & • & • & • & • & • &&\\ 
• & • & • & \vdots & • & \ddots & • & • & • & • & • & • & • & • & • & • & •&& \\ 
• & • & • & 1 & 1 & \cdots & 1 & • & • & • & • & • & • & • & • & • & • &&\\ 
• & • & • & 1 & 1 & \cdots & 1 & 1 & • & • & • & • & • & • & • & • & • &&\\ 
• & • & • & 1 & 1 & \cdots & 1 & 1 & 1 & • & • & • & • & • & • & • & • &&\\ 
• & • & • & 1 & 1 &  \cdots & &  &  & 1 & • & • & • & • & • & • & • &&\\ 
• & • & • & \vdots & \vdots & \cdots & \vdots & • & • & • & \ddots & • & • & • & • & • & • &&\\ 
• & • & • & 1 & 1 & \cdots & 1& 0 & • & • & • & 1 & • & • & • & • & • &&\\ 
• & • & • & 1 & 1 & \cdots & 1& 1 & 0 & • & • & • & 1 & • & • & • & • &&\\ 
• & • & • &  \vdots &  \vdots &  \cdots &  \vdots &  \vdots &  \vdots & • & • & • & • & \ddots & • & • & • &&\\ 
• & • & • & 1 & 1 & \cdots & 1 & 1 & 0 & • & • & • & • & • & 1 & • & •&& \\ 
• & • & • & 1& 1 & \cdots & 1 & 1 & 1 & 0 & • & • & • & • & • & 1 & •&& \\ 
• & • & • & \vdots & \vdots &  \cdots & \vdots & \vdots &  \vdots &  \vdots & • & • & • & • & • & • & \ddots&&\\
• & • & • & 1 & 1 & \cdots & 1 & 1 & 1 & 0 & • & • & • & • & • & • & &1&\\
• & • & • & 1 & 1 & \cdots & 1 & 1 & 1 & 0 & • & • & • & • & • & • & &&1
\end{array} 
\right],\\
&\hspace*{.8cm} \begin{array}{cccccccccccccccccccc}
• & • &• &  & \uparrow s^{th}col & • & • & • &   &\uparrow k^{th}col &    &  & • & & • & • & &&
\end{array} 
\end{align*}
where the empty entries have value zero.  By a simple induction we have 
\[
L^{-1} = \left[
\begin{array}{ccccccccccccccccccc}
1 & • & • & • & • & • & • & • & • & • & • & • & • & • & • & • & • &&\\ 
• & 1 & • & • & • & • & • & • & • & • & • & • & • & • & • & • & • &&\\ 
• & • & \ddots & • & • & • & • & • & • & • & • & • & • & • & • & • & • &&\\ 
• & • & • & 1 & • & • & • & • & • & • & • & • & • & • & • & • & •&& \\ 
• & • & • & -1 & 1 & • & • & • & • & • & • & • & • & • & • & • & • &&\\ 
• & • & • & \vdots & • & \ddots & • & • & • & • & • & • & • & • & • & • & •&& \\ 
• & • & • & 1 & 1 & \cdots & 1 & • & • & • & • & • & • & • & • & • & • &&\\ 
• & • & • & 0 & 0 & \cdots & -1 & 1 & • & • & • & • & • & • & • & • & • &&\\ 
• & • & • & 0 & 0 & \cdots & 0 & -1 & 1 & • & • & • & • & • & • & • & • &&\\ 
• & • & • & 1 & 1 &  \cdots & &  &  & 1 & • & • & • & • & • & • & • &&\\ 
• & • & • & \vdots & \vdots & \cdots & \vdots & • & • & • & \ddots & • & • & • & • & • & • &&\\ 
• & • & • & 1 & 1 & \cdots & 1& 0 & • & • & • & 1 & • & • & • & • & • &&\\ 
• & • & • & 1 & 1 & \cdots & 1& 1 & 0 & • & • & • & 1 & • & • & • & • &&\\ 
• & • & • &  \vdots &  \vdots &  \cdots &  \vdots &  \vdots &  \vdots & • & • & • & • & \ddots & • & • & • &&\\ 
• & • & • & 1 & 1 & \cdots & 1 & 1 & 0 & • & • & • & • & • & 1 & • & •&& \\ 
• & • & • & 1& 1 & \cdots & 1 & 1 & 1 & 0 & • & • & • & • & • & 1 & •&& \\ 
• & • & • & \vdots & \vdots &  \cdots & \vdots & \vdots &  \vdots &  \vdots & • & • & • & • & • & • & \ddots&&\\
• & • & • & 1 & 1 & \cdots & 1 & 1 & 1 & 0 & • & • & • & • & • & • & &1&\\
• & • & • & 1 & 1 & \cdots & 1 & 1 & 1 & 0 & • & • & • & • & • & • & &&1
\end{array} 
\right]
\]
\begin{remark}
It is important to note that   a given list of eigenvalues $\sigma$ is realizable with our  method, when  the value  of Perron eigenvalue or the last positive eigenvalue that is used for difference between positive eigenvalues  $\lambda_s$ and $\sum \alpha_i$ at least equal the minimum of negative eigenvalues.  For instance the list  $\sigma=\{6,1,1,1,-4,-4\}$ is solvable and the list $\sigma=\{6,1,1,1,1,-4,-4\}$ is not solvable by our method, although we add a positive amount in this list, since in the solving process we can not use the Perron eigenvalue for the last eigenvalue -4.  When  we put the elements of main diagonal of upper triangular matrix $A$ from  up to down by decreasing order,   but if we  interchange the elements of main diagonal of matrix $A$ as the list  $\sigma=\{6,1,1,1,-4,-4,1\}$ is solvable and we do not consider the last column in our algorithm. So far for the convenience, we assume that  the values of main diagonal of matrix $A$ is decreasing and in general   it is not necessary. 
\end{remark}

\subsection{ The spectrum with $k > \frac{n}{2}$}
From the beginning discussion until now we study NIEP with  the number of negative eigenvalues more than or equal the number of positive eigenvalues. In this section we study in which the number of positive eigenvalues  in the  given spectrum $ \sigma $ more than the  number of negative eigenvalues.

The spectrum with one negative eigenvalues is very simply solved. In continue we study the cases that the spectrum $ \sigma $ has more than or equal two negative eigenvalues.
So there are at least $ 5 $ members in $ \sigma$. Unfortunately in this section there exist some cases that the lower triangular matrix must have some non zero-one values. 

\subsubsection{The spectrum  with two negative eigenvalues}
 In this subsection we solve NIEP with three  positive and  two negative eigenvalues that satisfies in condition \eqref{JLL} and then we solve the extension of problem of NIEP with three positive eigenvalues and more than three negative eigenvalues. 
 
\begin{theorem}
Let $ \sigma = \{ \lambda_1, \lambda_2, \lambda_3, \lambda_4, \lambda_5  \} $ such that $ \lambda_1\ge \lambda_2\ge \lambda_3\ge 0 >  \lambda_4\ge \lambda_5$ and satisfies in \eqref{JLL}, then three exist a set of nonnegative eigenvalues in which $ \sigma $ is its spectrum.
\end{theorem}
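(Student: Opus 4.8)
The plan is to follow the template already used in Theorems 2.1--2.9: exhibit an upper triangular matrix $A$ carrying the prescribed numbers $\lambda_1,\dots,\lambda_5$ on its diagonal, together with a unit lower triangular matrix $L$, and then show that the similar matrix $C=LAL^{-1}$ can be made entrywise nonnegative by a suitable choice of the free off-diagonal parameters. Because $C$ is similar to $A$ it automatically has spectrum $\sigma$, so the entire content of the statement is the \emph{nonnegativity} of $C$, not its spectrum.

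First I would place $\lambda_1,\dots,\lambda_5$ along the diagonal of $A$ and introduce off-diagonal parameters $\alpha_4,\alpha_5$ attached to the two negative eigenvalues $\lambda_4,\lambda_5$ in the positive rows, together with one or two auxiliary parameters of the type $\alpha$, $a_{ij}$ that already appeared in \eqref{e2} and \eqref{e3}. The matrix $L$ is then chosen so that the deficit produced by each negative eigenvalue is pulled back onto the diagonal of the positive rows: the first column of ones produces the Perron interaction $\lambda_1-t$ (with $t=\sum_{i=2}^{5}\alpha_i$) in the $(1,1)$ entry, while the remaining diagonal entries of $C$ take the form $\alpha_i+\lambda_i$, which are nonnegative precisely when $\alpha_i\ge-\lambda_i$. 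After forming $C=LAL^{-1}$ explicitly I would read off the inequalities that render every entry nonnegative, namely the lower bounds $-\lambda_i\le\alpha_i$, the Perron budget constraint $t\le\lambda_1$, and interval constraints on the auxiliary parameters analogous to $-\alpha_3\le\alpha\le\lambda_1-\lambda_2-t$.

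The genuinely new difficulty relative to the $k\le\frac n2$ regime is bookkeeping: here there are three positive eigenvalues but only two negative ones, so the two negatives cannot each be matched to a distinct positive of larger magnitude, and in particular $\lambda_2$ or $\lambda_3$ may be strictly smaller than $|\lambda_4|$ or $|\lambda_5|$. I would therefore distribute the weight of a single negative eigenvalue across more than one positive row, which is exactly what forces some entries of $L$ to take values other than $0$ and $1$, as flagged in the paragraph opening this subsection. I expect the argument to split into two sub-cases: one in which some positive eigenvalue already dominates a negative one, where a $0$--$1$ matrix $L$ suffices and the computation collapses to the pattern of \eqref{e2}; and the complementary sub-case, where the Perron eigenvalue must carry the residual deficit and a genuinely fractional $L$ is needed.

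The main obstacle is the feasibility of this inequality system. One must verify that the hypotheses collected in \eqref{JLL} --- the Perron condition $\lambda_1\ge|\lambda_5|$ and the nonnegative trace $\sum_{i}\lambda_i\ge0$ --- leave enough room to choose the $\alpha_i$ and the fractional entries of $L$ simultaneously, and this is precisely where the realizability criterion of the earlier Remark enters, guaranteeing that the Perron eigenvalue remains available to absorb the most negative $\lambda_5$. Checking nonnegativity of $C$ in the second sub-case, where the distribution of $\lambda_4,\lambda_5$ over the three positive rows is constrained on both sides, is the technical heart of the proof; the remaining entries are nonnegative for elementary reasons once the absorption parameters satisfy $-\lambda_i\le\alpha_i$ and $t\le\lambda_1$.
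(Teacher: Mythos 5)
Your proposal correctly identifies the paper's overall strategy --- take $C=LAL^{-1}$ with $A$ upper triangular carrying $\sigma$ on its diagonal and $L$ unit lower triangular, note that similarity settles the spectrum so nonnegativity is the entire content, and observe that in this regime $L$ must contain entries other than $0$ and $1$. But the proposal stops exactly where the proof has to begin, and the one concrete mechanism you commit to is infeasible here. You predict the diagonal entries of $C$ will be $\lambda_i+\alpha_i$ subject to $-\lambda_i\le\alpha_i$ and the Perron budget $t=\sum_{i=2}^{5}\alpha_i\le\lambda_1$, i.e.\ the pattern of \eqref{e1}--\eqref{e3}. For $\sigma=\{6,1,1,-4,-4\}$ (the paper's own Example in this subsection) that system has no solution: one needs $\alpha_4,\alpha_5\ge 4$, and the remaining $\alpha_i$ cannot be taken negative without producing negative entries of $C$ elsewhere, so $t\ge 8>6=\lambda_1$. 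Hence your "first sub-case, where a $0$--$1$ matrix $L$ suffices and the computation collapses to the pattern of \eqref{e2}" does not exist in general, and your second sub-case --- which you yourself call the technical heart --- is precisely the theorem, left as an expectation rather than an argument: no explicit $A$, no explicit $L$, no computed $C$, no inequality system, and no feasibility verification against \eqref{JLL}.

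For comparison, the paper commits to a concrete chained absorption rather than the budget pattern: the $(1,2)$ entry of $A$ is $-(\lambda_2+\lambda_3+\lambda_4+\lambda_5)=\lambda_1-s_1$, so the whole trace $s_1$ lands in the $(1,1)$ entry of $C$ and the rest of the diagonal of $C$ is zero; row $2$ of $A$ carries $-(\lambda_3+\lambda_5)$ and $-\lambda_4$; row $3$ carries a free parameter $a_{34}$ and $-\lambda_5$; and $L$ is the familiar $0$--$1$ matrix except for two fractional entries $l_{31},l_{51}$ in its first column (the rows of $\lambda_3$ and $\lambda_5$). Nonnegativity of $C$ then reduces to explicit coupled interval constraints on $a_{34}$, $l_{31}$, $l_{51}$, such as $\lambda_4\le a_{34}\le\frac{(s_1-\lambda_1)(s_1-\lambda_2)}{\lambda_3+\lambda_5}-\lambda_3$. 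To be fair, the paper itself only lists these inequalities and never proves they are simultaneously satisfiable under \eqref{JLL} (note they implicitly force $\lambda_3+\lambda_5\le 0$, since $-(\lambda_3+\lambda_5)$ is an entry of $C$), so even the target proof is incomplete at its final step; but it does reduce the theorem to an explicit finite system, which your sketch never reaches. To repair your write-up you would need to (a) exhibit $A$ and $L$ --- your instinct to distribute a negative eigenvalue's weight over several positive rows is exactly what the paper's chaining realizes, (b) compute $C$ and extract the inequalities, and (c) argue feasibility under the stated hypotheses.
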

\begin{proof}
In this case we construct the members $ A $ and $ L $ as follows.

\[
A=\left[ \begin {array}{ccccc} \lambda_{{1}}&-\lambda_{{2}}-\lambda_{{4
}}-\lambda_{{3}}-\lambda_{{5}}&0&0&0\\\noalign{\medskip}0&\lambda_{{2}
}&-\lambda_{{3}}-\lambda_{{5}}&-\lambda_{{4}}&0\\\noalign{\medskip}0&0
&\lambda_{{3}}&a_{{34}}&-\lambda_{{5}}\\\noalign{\medskip}0&0&0&
\lambda_{{4}}&0\\\noalign{\medskip}0&0&0&0&\lambda_{{5}}\end {array}
 \right],
\qquad
L=\left[ \begin {array}{ccccc} 1&0&0&0&0\\\noalign{\medskip}1&1&0&0&0
\\\noalign{\medskip}l_{{31}}&1&1&0&0\\\noalign{\medskip}1&1&0&1&0
\\\noalign{\medskip}l_{{51}}&1&1&0&1\end {array} \right] ,
\]
so   
for simplicity, we use the following symbols to represent the matrix $C$:
\begin{align*}
C&= LA L^{-1} =\left[ \begin {array}{ccccc} s_1& \lambda_{1}-s_1 &0&0&0\\
 c_{2 1} &0&-(\lambda_{{3}}+\lambda_{{5}})&-
\lambda_{{4}}&0\\
c_{31} &c_{32}&0&-\lambda_{{4}}+a_{{34}}&-\lambda_{{5}}\\
c_{41} &-\lambda_{{4}}&-(\lambda_{{3}}+
\lambda_{{5}})&0&0\\ 
 c_{51} &c_{52}&-\lambda_
{{5}}&-\lambda_{{4}}+a_{{34}}&0\end {array} \right], 
\end{align*}
where
\begin{align*}
c_{21}=c_{41}& = s_1- \lambda_{2}-  (  \lambda_{3}+\lambda_{5})  ( 1-l_{ 31 }  ),\\
c_{31}&=  l_{{31}} s_1 - \lambda_{{2}}-\lambda_{{5}} ( 1-l_{{51}}  ), \\
c_{51}&= l_{{51}} s_1 - \lambda_{{2}}-\lambda_{{5}} \left( 1-l_{{31}} \right), \\
c_{32}&=l_{ 31 } ( \lambda_{1}-s_1) +s_1-\lambda_1-\lambda_3-a_{{34}},\\
c_{52}&=l_{ 51 } ( \lambda_{1}-s_1) +s_1-\lambda_1-\lambda_3-a_{{34}}.
\end{align*}
 The matrix $ C $ is nonnegative if all entries of this matrix will be nonnegative and then we have
 \begin{align*}
 1-\frac{\lambda_3 + a_{34}}{s_1-\lambda_1} \le & l_{31} \le 1- \frac{s_1 - \lambda_2}{\lambda_3 +\lambda_5},\\
  1-\frac{\lambda_3 + a_{34}}{s_1-\lambda_1} \le & l_{51} \le 1- \frac{l_{31} s_1 - \lambda_2}{\lambda_5},\\
 \lambda_4 \le & a_{34 } \le \frac{(s_1 - \lambda_1)(s_1 - \lambda_2)}{\lambda_3 +\lambda_5}- \lambda_3.
 \end{align*}
\end{proof}
\begin{remark}
If the number of positive eigenvalues more than three, we can continue the above method for construction of matrices $A$ and $L$ and get nonnegative matrix $C$, for instance if  for $n=6$ in spectrum $\sigma$ we have $ \lambda_1\ge \lambda_2\ge \cdots \ge  \lambda_{n-2}\ge 0 >  \lambda_{n-1}\ge \lambda_n $,  then we assume that the matrices $A$ and $L$ as
\[
A=\left[ \begin {array}{cccccc} \lambda_{{1}}&-\lambda_{{3}}-\lambda_{{
4}}-\lambda_{{5}}-\lambda_{{6}}-\lambda_{{2}}&0&0&0&0
\\ \noalign{\medskip}0&\lambda_{{2}}&-\lambda_{{3}}-\lambda_{{4}}-
\lambda_{{5}}-\lambda_{{6}}&0&0&0\\ \noalign{\medskip}0&0&\lambda_{{3}
}&-\lambda_{{4}}-\lambda_{{6}}&-\lambda_{{5}}&0\\ \noalign{\medskip}0&0
&0&\lambda_{{4}}&\alpha _{45}&-\lambda_{{6}}\\ \noalign{\medskip}0&0&0
&0&\lambda_{{5}}&0\\ \noalign{\medskip}0&0&0&0&0&\lambda_{{6}}
\end {array} \right],
\]
\[ 
 L=\left[ \begin {array}{cccccc} 1&0&0&0&0&0\\ \noalign{\medskip}1&1&0&0
&0&0\\ \noalign{\medskip}1&1&1&0&0&0\\ \noalign{\medskip}l_{41}&l_{42}
&1&1&0&0\\ \noalign{\medskip}1&1&1&0&1&0\\ \noalign{\medskip}l_{61}&l_
{62}&1&1&0&1\end {array} \right],
\]
 therefore the nonnegative matrix $C$ obtain as follows 
\[
 C=\left[ \begin {array}{cccccc} s_1&\lambda_{1}-s_1&0&0&0&0\\ 
s_1 - \lambda_{2}&0& \lambda_{1}+\lambda_{2}-s_1&0&0&0\\ 
c_{31} &c_{32}&0&-\lambda_{{4}}-\lambda_{{6}}&-\lambda_{{5}}&0\\ 
c_{41}& c_{42} & c_{43}&0&-
\lambda_{{5}}+\alpha _{45}&-\lambda_{{6}}\\
c_{51} &c_{52} &-\lambda_{{5}}&-\lambda_{{4}}-\lambda_{{6}}&0&0
\\  
c_{61} &c_{62}&c_{62}&-\lambda_{{6}}&-\lambda_{{5}}
+\alpha _{45}&0\end {array} \right], 
 \] 
where
\begin{align*}
c_{31}&=s_1 - \lambda_2 + \left( -\lambda_{{4}}-\lambda_{{6}} \right)  \left( -l_{41}+l_{42} \right),\\
c_{41}&=l_{41}s_1 -l_{42}\lambda_{{2}} -\lambda_{{6}} \left( -l_{61}+l_{62} \right), \\
c_{51} & =s_1 - \lambda_{2}+ \left( -\lambda_{{4}}-\lambda_{{6}} \right)  \left( -l_{41}+l_{42} \right),\\
c_{61}& = l_{61}s_1 -l_{62}\,\lambda_{{2}}-\lambda_{{6}} \left( -l_{41}+l_{42} \right),\\
c_{32}&= -\lambda_{{3}}+ \left( -\lambda_{{4}}-\lambda_{{6}} \right)  \left( 1-l_{42} \right), \\
c_{42}&= \left(  \lambda_{1}-s_1\right) (l_{41} - l_{42})-\lambda_{{3}}-\lambda_{{6}} \left( 1-l_{62} \right), \\
c_{52}& = -\lambda_{{3}}+ \left( -\lambda_{{4}}-\lambda_{{6}} \right)  \left( 1-l_{42} \right),\\
c_{62}& =l_{62}\left( \lambda_{1}+\lambda_{2}-s_1 \right) +\lambda_{{3}}+
\lambda_{{6}}+\lambda_{{5}}-\alpha _{45},\\
c_{43}& = l_{42} \left(  \lambda_{1}+\lambda_{2}-s_1\right) +\lambda_{{3}}+\lambda_{{6}}+\lambda_{{5}}-\alpha _{45},\\
c_{63}&= \left(  \lambda_{1}-s_1 \right) (l_{61} - l_{62}) -\lambda_{{3}}-\lambda_{{6}} \left( 1-l_{42} \right).\\
\end{align*}
In general case we can construct the matrices $A, L$ and $C$ as above method and the  non zero-one entries of matrix $L$
is located  in the row number of the last positive eigenvalues  and row number of the last negative eigenvalues of matrix $A$.
\end{remark}
\begin{example}
Let $ \sigma =  \{ 6,1,1,-4,-4 \} $. This spectrum is solved in \cite{5} and  in \cite{borobia} is discussed about its C-realziabity.  We can find the nonnegative matrix $C$ that has spectrum $\sigma $  by our method and simpler. For this
we consider matrices $A$ and $L$ as:
\[
\left[ \begin {array}{ccccc} 6&6&0&0&0\\\noalign{\medskip}0&1&3&4&0
\\\noalign{\medskip}0&0&1&-4&4\\\noalign{\medskip}0&0&0&-4&0
\\\noalign{\medskip}0&0&0&0&-4\end {array} \right],
\qquad
\left[ \begin {array}{ccccc} 1&0&0&0&0\\\noalign{\medskip}1&1&0&0&0
\\\noalign{\medskip}l_{{21}}&1&1&0&0\\\noalign{\medskip}1&1&0&1&0
\\\noalign{\medskip}l_{{51}}&1&1&0&1\end {array} \right] 
\]
then the matrix $C$ is computed by relation $C=LAL^{-1}$ i.e.
\[
\left[ \begin {array}{ccccc} 0&6&0&0&0\\\noalign{\medskip}2-3\,l_{{21
}}&0&3&4&0\\\noalign{\medskip}3-4\,l_{{51}}&-3+6\,l_{{21}}&0&0&4
\\\noalign{\medskip}2-3\,l_{{21}}&4&3&0&0\\\noalign{\medskip}3-4\,l_{{
21}}&-3+6\,l_{{51}}&4&0&0\end {array} \right].
\]
The interval that of $l_21$ and $l_{51}$ that by them them matrix $C$ is nonnegative very simple is computed. 
\end{example}
\subsubsection{The spectrum  with three negative eigenvalues}
\begin{theorem}
Assume  that $ \sigma= \{ \lambda_1, \lambda_2, \cdots, \lambda_n \} $ where $ \lambda_1 \ge \lambda_2 \ge \cdots \ge \lambda_{n-3} \ge 0 > \lambda_{n-2} > \lambda_{n-1} > \lambda_n $ with $ \sum_{i=1}^{n} \lambda_i \ge 0$, then there exist a set of nonnegative matrix $C$ such that $\sigma $ is its spectrum.
\end{theorem}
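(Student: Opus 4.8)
The plan is to imitate the two--negative--eigenvalue construction carried out just above (the $5\times5$ theorem of this subsection and the $6\times6$ instance in the Remark following it), extending it so as to absorb three negative eigenvalues. I would record $\lambda_1,\ldots,\lambda_n$ on the main diagonal of an upper triangular matrix $A$, so that the three negative values $\lambda_{n-2},\lambda_{n-1},\lambda_n$ occupy the last three diagonal positions. The magnitudes $-\lambda_{n-2},-\lambda_{n-1},-\lambda_n$ are then distributed into the strictly upper triangular part: the $(1,2)$ entry carries the global correction $\lambda_1-s_1$, while the entries in the rows indexed by the last few nonnegative eigenvalues carry the remaining pieces, arranged exactly so that conjugation by $L$ transfers every negative contribution onto the three negative diagonal slots. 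For $L$ I would take a unit lower triangular matrix whose first column and cancellation positions are filled with $1$'s, together with a handful of genuinely free entries $l_{ij}$; following the rule isolated at the end of the previous Remark, these non-$\{0,1\}$ entries appear in the row of the last nonnegative eigenvalue (row $n-3$) and in the row of the smallest eigenvalue (row $n$), and I would also keep one or two absorption parameters $a_{ij}$ inside $A$.

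With $A$ and $L$ fixed I would set $C=LAL^{-1}$. Since $L$ is invertible, $C$ is similar to $A$ and hence has spectrum $\sigma$, so the theorem reduces to exhibiting parameter values for which $C\ge0$ entrywise. I would compute $C$ symbolically, exactly as in the displayed $5\times5$ and $6\times6$ matrices above: the diagonal entries collapse to $0$, the first--column entries acquire the form $l_{i1}s_1-\lambda_2-(\text{corrections})$, and every remaining entry is affine in the free parameters. Imposing nonnegativity then yields a nested system of interval constraints: an outer choice of $s_1$, inner bounds of the type $1-\frac{\,\cdots\,}{s_1-\lambda_1}\le l_{ij}\le 1-\frac{\,\cdots\,}{\lambda_{n-3}+\lambda_n}$ for the free entries of $L$, and bounds $\lambda_{n-1}\le a_{ij}\le\cdots$ for the absorption entries, in direct analogy with the three displayed inequalities of the two--negative case.

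The heart of the proof is the \emph{feasibility step}: I must show these coupled intervals are simultaneously nonempty. I would resolve them in cascade, fixing $s_1$ first, then the free entries in row $n-3$, and finally those in row $n$, checking at each stage that the lower endpoint does not exceed the upper one. This is precisely where the hypotheses enter: the Perron dominance $\lambda_1\ge|\lambda_i|$ keeps the first row and first column nonnegative and controls the sign of the denominators $s_1-\lambda_1$, while $\sum_{i=1}^n\lambda_i\ge0$ guarantees that enough positive mass is available to cancel all three negative eigenvalues, which is the quantitative form of the realizability remark that the positive eigenvalue used for the final cancellation must dominate the smallest negative one. I expect the main obstacle to be bookkeeping rather than any new idea: with three negative eigenvalues there are three interlocking free parameters instead of one or two, so the argument needs a careful ordering of the choices together with a case distinction according to whether the running partial sums $\lambda_{n-3}+\lambda_n$, $\lambda_{n-3}+\lambda_n+\lambda_{n-1}$, $\ldots$ remain nonnegative as the negative eigenvalues are absorbed one at a time, the same branching already present in the placement rules of the general $k$--positive theorem. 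Once every interval is shown nonempty, any admissible choice produces a nonnegative matrix $C$ with spectrum $\sigma$, which proves the claim.
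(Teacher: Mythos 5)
Your plan is a faithful reconstruction of the paper's intended argument --- indeed it is \emph{more} explicit than what the paper provides, since the paper's entire proof of this theorem is the sentence that the process ``is the same as before,'' followed by a single worked $8\times 8$ example with concrete numerical choices of the $a_{ij}$ and $l_{ij}$. So on matters of construction ($A$ upper triangular with $\sigma$ on the diagonal and the magnitudes $-\lambda_{n-2},-\lambda_{n-1},-\lambda_n$ distributed into the rows of the trailing nonnegative eigenvalues; $L$ unit lower triangular with free entries in the cancellation rows; $C=LAL^{-1}$) you match the paper. The problem is the step you yourself single out as the heart of the proof and then defer: the feasibility of the nested interval constraints. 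You assert that $\sum_{i=1}^n\lambda_i\ge 0$ ``guarantees that enough positive mass is available'' and that what remains is bookkeeping. It is not bookkeeping, and no ordering of the cascade can rescue it, because the hypotheses of the theorem are not sufficient for realizability at all.

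Concretely, take $n=5$ and $\sigma=(3,\,3,\,-2+\eps,\,-2,\,-2-\eps)$ for small $\eps>0$. This satisfies every stated hypothesis: $\lambda_1\ge\lambda_2\ge 0>\lambda_3>\lambda_4>\lambda_5$, zero sum, Perron dominance, and the JLL conditions the paper assumes throughout ($s_1=0$, $s_2=30+2\eps^2$, $s_3=30+O(\eps^2)$). Yet it violates the Laffey--Meehan necessary condition $s_2^2\le 4s_4$ for trace-zero nonnegative $5\times5$ matrices \cite{Meehan}: here $s_2^2=900+O(\eps^2)$ while $4s_4=840+O(\eps^2)$. Hence no nonnegative matrix has this spectrum, so for this admissible list some interval in your cascade is necessarily empty no matter how you choose $s_1$, the $l_{ij}$, and the $a_{ij}$; the feasibility step cannot be completed from the stated hypotheses, and the theorem as stated is false. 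The paper itself tacitly concedes this: its Remark after the general $k$-positive theorem admits that lists satisfying the hypotheses (e.g.\ $\{6,1,1,1,1,-4,-4\}$) can fail to be ``solvable by our method.'' A correct version of your argument would have to add a quantitative hypothesis --- roughly, that each positive eigenvalue used in the cancellation dominates the total negative mass assigned to it, in the spirit of that Remark --- and then verify the interval endpoints explicitly, which neither you nor the paper does.
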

\begin{proof}. The proof process is the same as before, and we only need to give an example to illustrate this process.
\end{proof}
\begin{example}
Let $\sigma =\{\lambda_1,\lambda_2,\lambda_3,\lambda_4,\lambda_5,\lambda_6,\lambda_7,\lambda_8 \}=\{8,2,2,2,1,-5,-5,-5 \}$ that $\sigma$ satisfies in conditions \eqref{JLL} special $\sum \lambda_i = \sum \lambda_i^3 =0$, and $\sum_{k=4} \lambda_i^k > 0$.
By our method we consider
 $$
 A=\left[ \begin {array}{cccccccc} \lambda_{{1}}&-s_1+\lambda_1 &0&0&0&0&0&0\\\noalign{\medskip}0&\lambda_{{2}}&-s_1+\lambda_1+\lambda_2&0
&0&0&0&0\\\noalign{\medskip}0&0&\lambda_{{3}}&-\lambda_{{8}}-\lambda_{
{5}}-\lambda_{{4}}-\lambda_{{7}}&0&-\lambda_{{6}}&0&0
\\\noalign{\medskip}0&0&0&\lambda_{{4}}&-\lambda_{{8}}-\lambda_{{5}}&a
_{{46}}&-\lambda_{{7}}&0\\\noalign{\medskip}0&0&0&0&\lambda_{{5}}&a_{{
56}}&a_{{57}}&-\lambda_{{8}}\\\noalign{\medskip}0&0&0&0&0&\lambda_{{6}
}&0&0\\\noalign{\medskip}0&0&0&0&0&0&\lambda_{{7}}&0
\\\noalign{\medskip}0&0&0&0&0&0&0&\lambda_{{8}}\end {array} \right], 
 $$
 where $s_1=\sum_{i=1}^n\lambda_i$ and
  $$
  L= \left[ \begin {array}{cccccccc} 1&0&0&0&0&0&0&0\\\noalign{\medskip}1&
1&0&0&0&0&0&0\\\noalign{\medskip}l_{{31}}&1&1&0&0&0&0&0
\\\noalign{\medskip}l_{{41}}&l_{{42}}&1&1&0&0&0&0\\\noalign{\medskip}l
_{{51}}&l_{{52}}&l_{{53}}&1&1&0&0&0\\\noalign{\medskip}l_{{61}}&l_{{62
}}&1&0&0&1&0&0\\\noalign{\medskip}l_{{71}}&l_{{72}}&l_{{73}}&1&0&0&1&0
\\\noalign{\medskip}l_{{81}}&l_{{82}}&l_{{83}}&l_{{84}}&1&0&0&1
\end {array} \right]. 
  $$
  If we select $a_{46}=-5,a_{56}=\frac{20}{7}, a_{57}=-5$ and also select $l_{31}=\frac45, l_{41}=\frac14, l_{42}=\frac{33}{70}, l_{51}=\frac{1}{20}, l_{61}=\frac{23}{70}, l_{62}=\frac12, l_{71}=l_{41}, l_{72}=l_{42}, l_{81}=l_{51}, l_{82}=\frac{9}{70},\l_{83}=\frac37 $ the matrix $C$ is solution of problem is computed as follows:
  \[
  C=\left[ \begin {array}{cccccccc} 0&8&0&0&0&0&0&0\\\noalign{\medskip}0&0
&10&0&0&0&0&0\\\noalign{\medskip}{\frac {57}{140}}&{\frac {13}{5}}&0&7
&0&5&0&0\\\noalign{\medskip}{\frac {67}{140}}&\frac{1}{14}&0&0&4&0&5&0
\\\noalign{\medskip}{\frac {19}{140}}&{\frac {1}{70}}&0&0&0&0&0&5
\\\noalign{\medskip}{\frac {11}{20}}&{\frac {23}{70}}&0&7&0&0&0&0
\\\noalign{\medskip}{\frac {67}{140}}&\frac{1}{14}&0&5&4&0&0&0
\\\noalign{\medskip}{\frac {19}{140}}&{\frac {1}{70}}&0&0&5&0&0&0
\end {array} \right]. 
  \]
 Now we present  another solution for this problem. If we consider the matrix 
 \[
 A=\left[ \begin {array}{cccccccc} \lambda_{{1}}&-\lambda_{{7}}-\lambda_
{{3}}-\lambda_{{2}}&0&-\lambda_{{8}}-\lambda_{{5}}-\lambda_{{4}}&0&-
\lambda_{{6}}&0&0\\\noalign{\medskip}0&\lambda_{{2}}&-\lambda_{{7}}-
\lambda_{{3}}&a_{{24}}&a_{{25}}&a_{{26}}&0&0\\\noalign{\medskip}0&0&
\lambda_{{3}}&a_{{34}}&a_{{35}}&a_{{36}}&-\lambda_{{7}}&0
\\\noalign{\medskip}0&0&0&\lambda_{{4}}&-\lambda_{{8}}-\lambda_{{5}}&a
_{{46}}&a_{{47}}&0\\\noalign{\medskip}0&0&0&0&\lambda_{{5}}&a_{{56}}&a
_{{57}}&-\lambda_{{8}}\\\noalign{\medskip}0&0&0&0&0&\lambda_{{6}}&0&0
\\\noalign{\medskip}0&0&0&0&0&0&\lambda_{{7}}&0\\\noalign{\medskip}0&0
&0&0&0&0&0&\lambda_{{8}}\end {array} \right], 
 \]
  and the matrix
  \[
  L=\left[ \begin {array}{cccccccc} 1&0&0&0&0&0&0&0\\\noalign{\medskip}1&
1&0&0&0&0&0&0\\\noalign{\medskip}l_{{31}}&1&1&0&0&0&0&0
\\\noalign{\medskip}1&0&0&1&0&0&0&0\\\noalign{\medskip}l_{{51}}&l_{{52
}}&l_{{53}}&1&1&0&0&0\\\noalign{\medskip}1&0&0&0&0&1&0&0
\\\noalign{\medskip}l_{{71}}&l_{{72}}&1&0&0&0&1&0\\\noalign{\medskip}l
_{{81}}&l_{{82}}&l_{{83}}&l_{{84}}&1&0&0&1\end {array} \right], 
  \]
  again if we select $a_{24}=-2,a_{34}=a_{25}=a_{35}=a_{47}=a_{57}=0, a_{46}=-5,a_{56}=-2,a_{26}=a_{26}=-5$ and $l_{31}=l_{51}=2, l_{52}=\frac14, l_{53}=0, l_{71}=\frac43,l_{72}=\frac{-2}{3},l_{81}=2,l_{82}=l_{83}=0,l_{84}=1$, then the nonnegative matrix 
  \[
  C=\left[ \begin {array}{cccccccc} 0&1&0&2&0&5&0&0\\\noalign{\medskip}2&0
&3&0&0&0&0&0\\\noalign{\medskip}0&{\frac {22}{3}}&0&2&0&0&5&0
\\\noalign{\medskip}0&0&0&0&4&0&0&0\\\noalign{\medskip}1/2&7/4&3/4&1/2
&0&7/4&0&5\\\noalign{\medskip}5&1&0&2&0&0&0&0\\\noalign{\medskip}5/3&0
&0&4&0&5&0&0\\\noalign{\medskip}5/4&3/4&0&1&5&3&0&0\end {array}
 \right] ,
  \]
  is solution of problem.
\end{example}
\subsubsection{The spectrum  with $k$ negative eigenvalues}
In this subsection we consider the extended of problem.
\begin{theorem}
Consider $ \sigma = \{ \lambda_1, \lambda_2, \cdots , \lambda_n \} $ where $ \lambda_1 \ge \lambda_2 \ge \cdots \ge \lambda_{n-k} \ge 0 > \lambda_{n-k+1} \ge \cdots \ge \lambda_k $,
then there exist a set of nonnegative matrices that $ \sigma $ is its spectrum.
\end{theorem}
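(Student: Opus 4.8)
The plan is to extend the constructive similarity scheme of the earlier theorems rather than to argue abstractly: since $C=LAL^{-1}$ is automatically similar to the upper triangular matrix $A$ that carries $\sigma$ on its diagonal, the spectrum is preserved for free, and the whole problem collapses to choosing the free entries of $A$ and of the unit lower triangular matrix $L$ so that $C$ comes out entrywise nonnegative. Write $p=n-k$ for the number of nonnegative eigenvalues; because we are in the regime where the positive eigenvalues strictly outnumber the negative ones we have $p>k$, and by the standing necessary conditions from the introduction we may use $\lambda_1 \ge |\lambda_j|$ for all $j$ and $s_1 := \sum_{i=1}^{n}\lambda_i \ge 0$.

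First I would set up $A$ exactly as in the worked $n=5$ and $n=6$ cases. The large positive eigenvalues are chained along the superdiagonal of the leading block: the $(1,2)$ entry is $\lambda_1-s_1$ and each subsequent superdiagonal entry is $-(s_1-\lambda_1-\cdots-\lambda_i)$, so that after conjugation the $(1,1)$ entry of $C$ becomes the Perron value $s_1$ and the following diagonal entries collapse to $0$. The remaining positive eigenvalues, of which there are at least $k$ since $p>k$, are then spent to absorb the $k$ negative eigenvalues: in the row of each such positive $\lambda_i$ I place the entries $-\lambda_j$ (or free parameters $a_{ij}$) in the columns indexed by the negative eigenvalues, mirroring the distribution displayed in the two- and three-negative cases.

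Next I would build $L$ as a unit lower triangular matrix that is the identity off the first column except for a small designated set of free entries $l_{ij}$. The first column is filled with $1$'s in the positions dictated by which positive row absorbs which negative eigenvalue, and, as the remark following the two-negative case records, the only non-zero-one entries occur in the rows of the last positive eigenvalue and of the negative eigenvalues; these $l_{ij}$ are the tuning parameters. Using the explicit unit lower triangular form of $L^{-1}$ (obtained, as before, by a simple induction), I would compute $C=LAL^{-1}$ symbolically, so that every entry of $C$ is an affine function of the parameters $\{a_{ij}\}$ and $\{l_{ij}\}$.

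The main obstacle, and the only genuine content, is to show that the resulting system of inequalities $C_{ij}\ge 0$ is feasible. As in the small cases this reduces to checking, for each tuning parameter, that the prescribed lower bound does not exceed the prescribed upper bound, e.g. inequalities of the shape
\[
1-\frac{\lambda_i+a_{ij}}{s_1-\lambda_1} \le l_{ij} \le 1-\frac{s_1-\lambda_2}{\lambda_i+\lambda_j},
\]
together with $-\lambda_j \le a_{ij}$ and $t \le \lambda_1$. I expect these to be simultaneously satisfiable precisely because $s_1\ge 0$ and $\lambda_1 \ge |\lambda_j|$ leave enough slack, and because having strictly more positive than negative eigenvalues lets each negative be matched to a distinct positive row with room to spare. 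Proving nonemptiness of this feasible polytope in full generality is the hard step; following the paper's style I would reduce the general configuration to the displayed templates, verify the endpoint inequalities from the monotone ordering $\lambda_1 \ge \cdots \ge \lambda_n$ and from $s_1 \ge 0$, and then exhibit one explicit admissible choice of the parameters, concluding that $C \ge 0$ with spectrum $\sigma$.
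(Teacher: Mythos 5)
Your overall strategy is exactly the paper's: put $\sigma$ on the diagonal of an upper triangular $A$, spend positive eigenvalues to absorb the negative ones through off-diagonal entries $\alpha_i \ge -\lambda_i$, conjugate by a unit lower triangular $L$ whose first columns carry ones and whose few non-zero-one entries $l_{ij}$ act as tuning parameters, and then demand $C=LAL^{-1}\ge 0$ entrywise. Even your details track the paper's templates (the superdiagonal compensators built from $s_1$, the restriction of the non-unit entries of $L$ to the rows of the last positive and of the negative eigenvalues). The only structural divergence is cosmetic: the paper's general construction pairs each negative eigenvalue with a \emph{distinct} positive row along a shifted diagonal, placing the $\alpha_i$ at the entries $(n-k,n), (n-k-1,n-1), \ldots$, inserting compensators $-(\alpha_{n-j}-\lambda_{n-j})$ on the superdiagonal, and normalizing $L$ by row-sum conditions such as $l_{n1},\ldots,l_{nk}\ge 0$ with $\sum_j l_{nj}=k$, whereas you let one positive row absorb several negatives as in the $k\le \frac{n}{2}$ templates.

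The genuine gap is the one you name yourself and then defer: nonemptiness of the polytope cut out by the inequalities $C_{ij}\ge 0$ is never established --- you only promise to ``exhibit one explicit admissible choice,'' so what you have is a plan, not a proof. Moreover, the slack you hope to extract from $s_1\ge 0$ and $\lambda_1\ge|\lambda_j|$ provably does not suffice in the stated generality: the paper's own remark in the earlier subsection exhibits $\sigma=\{6,1,1,1,1,-4,-4\}$, which lies squarely in this theorem's regime (five positive versus two negative eigenvalues, nonnegative sum), yet is declared not solvable by the method, because no remaining positive eigenvalue is large enough to absorb the last $-4$ once the Perron root is spent. So any completion of your feasibility step must add a hypothesis of the kind that remark describes; it cannot follow from the listed necessary conditions alone. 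In fairness you should know that the paper's own proof of this theorem has exactly the same defect: it describes where to place the $\alpha_i$ and the rows of $L$, and then asserts that ``appropriate real numbers'' render $C$ nonnegative, with no verification that such numbers exist. Your attempt is thus faithful to the paper's argument, but neither your proposal nor the paper's text contains a complete proof of the statement as written.
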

\begin{proof}
For construction of matrix $A$ and $C$ we will do as follows.\\
In main diagonal elements of upper triangular matrix $ A $, the all elements of $ \sigma$ in order $ \lambda_1 \ge \lambda_2 \ge \cdots \ge \lambda_n  $, we put the entries $ (n-k, n), (n-k-1, n-1), \cdots , (n-2k, n-k) $ the real numbers $ \alpha_i \ge -\lambda_i $ that $ i= n-k, \cdots, n-2k $ respectively. In addition the entries $ (n-j-1, n-j) $ for $ j=k, \cdots 2k-1 $ we set $ -(\alpha_{n-j} -\lambda_{n-j}) $ respectively,
and another element between rows $1$ and $n-k$ and between $\alpha_i$ and $ \lambda_i $ we put appreciate real numbers whereas the matrix $ C= LAL^{-1} $ be nonnegative.\\
We choose the matrix $ L $  a lower unit triangular matrix and in the last row of $ L $ we consider $ 0 \le l_{r1}, l_{r2}, \cdots, l_{rk} $ that $ \sum_{i=1}^{k} l_{ni} =k $
and element of row $ n-1 $ all be $ 1 $ for $ j=1, 2, \cdots, k-1$, and row $ n-2 $ all element from $ j=1$, to $ n-2 $ be $ 1 $, and the row $ n-k $ we have $ n-k+1 $ element $ 1 $ and another element be zero.
We repeat above method with  less that one element for row $n-k-1$, i.e. in row $n-k-1$  we set $0 \le l_{n-k-1, 1}, l_{n-k-1, 2}, \cdots , l_{n-k-1, n-k-1}$, with $\sum_{j=1}^{k-1}l_{n-k-1,j}=k-1$
with choosing above method then the matrix $ C = LA L^{-1} $ is nonnegative. 
\end{proof}

\section{Complex spectrum}
In this section we assume that $\sigma \in \mathbb{C}$. We recall that  $\sigma = \bar{\sigma}$ is necessary condition for solvable problem. At first we consider the case $n=3$, this case has two complex conjugate eigenvalues and a Perron eigenvalue $\lambda_1 \ge | \lambda_2 \pm i \mu_2 | $ and for $n=4$ we have two real eigenvalues and a pair complex conjugate eigenvalue. For $n=5$ or more then we have several case that in continue we study all of them.

In this case we introduce the complex unit lower triangular matrix as follows:

The elements of main diagonal unit lower triangular matrix $L$ in this case are $ i$ or $1$.

\subsection{The case $n=3$}
We consider this case in two parts. At first we assume that $\lambda_2 > 0$ in $\sigma = \{ \lambda_1, \lambda_2\pm i \mu_2 \}$   and in another case $\lambda_2 <0$.

\textit{\textbf{Case 1. $\lambda_2>0$}}:
Now we bring   
an important Theorem from \cite{soto} and by helping unit lower triangular matrix in complex case, we find a nonnegative matrix for this Theorem.
\begin{theorem}
Let σ$\sigma=\{\lambda_1, -\lambda_2 \pm i \mu_2\} $, with $ \lambda_2>0 $, $ \mu_2>0  $ and $\lambda_1 \ge \sqrt{\lambda_2^2+\mu_2^2}  $. Then, $ \sigma $ is realizable by a nonnegative matrix $ A $ if and only if
\[\lambda_1 \ge 2 \lambda_2 + 3 \max \{0, \frac{\mu_2}{\sqrt{3}}-\lambda_2\}.\]
\end{theorem}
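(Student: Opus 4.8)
The plan is to show that the scalar bound in the statement is merely a compact encoding of two elementary inequalities, and then to realise $\sigma$ by a single nonnegative $3\times3$ matrix built through a complex unit lower triangular similarity of the kind introduced above. First I would record the reduction: writing $B$ for the asserted bound, a case analysis on the sign of $\tfrac{\mu_2}{\sqrt3}-\lambda_2$ shows that $B$ is equivalent to the conjunction of $\lambda_1\ge 2\lambda_2$ and $\lambda_1+\lambda_2\ge\sqrt3\,\mu_2$. Indeed, when $\mu_2\le\sqrt3\,\lambda_2$ the maximum vanishes and $B$ reads $\lambda_1\ge2\lambda_2$ (the second inequality then being automatic), while when $\mu_2>\sqrt3\,\lambda_2$ the maximum equals $\tfrac{\mu_2}{\sqrt3}-\lambda_2$ and $B$ reads $\lambda_1\ge\sqrt3\,\mu_2-\lambda_2$ (now the binding one). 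This reduction is the bookkeeping device that lets a single inequality govern two genuinely different regimes.

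For sufficiency I would construct the realisation in the paper's format $C=LAL^{-1}$. Take $A$ upper triangular with diagonal $(\lambda_1,\,-\lambda_2+\I\mu_2,\,-\lambda_2-\I\mu_2)$ and take $L$ a complex unit lower triangular matrix whose first column is all ones, as in Section 2, but carrying the entry $\I$ in position $(3,2)$. The role of the first column is, exactly as in the real theorems above, to redistribute the Perron value $\lambda_1$ across the matrix; the role of the single $\I$ is to conjugate the $2\times2$ complex-diagonal block $\bigl(\begin{smallmatrix}-\lambda_2+\I\mu_2&\ast\\0&-\lambda_2-\I\mu_2\end{smallmatrix}\bigr)$ into the real rotation block $\bigl(\begin{smallmatrix}-\lambda_2&\mu_2\\-\mu_2&-\lambda_2\end{smallmatrix}\bigr)$, so that $C$ comes out real. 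I would then fix the remaining free entries of $A$ so that $C$ is the circulant with first row $(d,p,q)$, where $d=\tfrac13(\lambda_1-2\lambda_2)$, $p=\tfrac13(\lambda_1+\lambda_2)+\tfrac{\mu_2}{\sqrt3}$ and $q=\tfrac13(\lambda_1+\lambda_2)-\tfrac{\mu_2}{\sqrt3}$; a direct expansion of $\det(xI-C)$ confirms the spectrum is $\sigma$.

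It remains to read off nonnegativity and necessity. Entrywise, $C\ge0$ holds precisely when $d\ge0$ and $q\ge0$ (the entry $p$ being nonnegative for free), and these two conditions are exactly $\lambda_1\ge2\lambda_2$ and $\lambda_1+\lambda_2\ge\sqrt3\,\mu_2$, which the first paragraph identified with $B$; this yields the realisation whenever $B$ holds. For the converse, any nonnegative realisation has trace $\lambda_1-2\lambda_2$, which is the power sum $s_1$ of \eqref{JLL} and hence nonnegative, forcing $\lambda_1\ge2\lambda_2$; the sharper constraint $\lambda_1+\lambda_2\ge\sqrt3\,\mu_2$, which bites only in the regime $\mu_2>\sqrt3\,\lambda_2$, is the content of the cited Theorem from \cite{soto}, which I would simply invoke, or reprove from the standard description of the admissible region for the spectrum of a $3\times3$ nonnegative matrix.

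The step I expect to be the main obstacle is guaranteeing that $C$ is simultaneously real and of the prescribed circulant shape: the $\I$ in $L$ must cancel the imaginary parts of the complex diagonal exactly, and the free parameters of $A$ together with the first column of $L$ must be tunable to the target values $d,p,q$ without reintroducing any negative entry off the circulant pattern. Routing the construction through the circulant is precisely what makes this transparent, because for a circulant the nonnegativity test collapses to the two inequalities above, whose meeting point $\mu_2=\sqrt3\,\lambda_2$ is exactly where the two regimes of the maximum join.
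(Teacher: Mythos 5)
Your proposal is correct in its main lines, and it takes a genuinely different route from the paper --- in fact it proves more than the paper does. The paper quotes this theorem from \cite{soto} and then only \emph{illustrates} its $C=LAL^{-1}$ machinery: for the sign convention matching the statement (its ``Case 2'', $\lambda_2<0$ there) it takes $L$ with third row $\bigl(\frac{\lambda_2^2}{\mu_2^2}+1,\ -\frac{\lambda_2}{\mu_2}\I+1,\ 1\bigr)$ and a free parameter $l_{21}$, and obtains a nonnegative $C$ exactly when $l_{21}$ lies in an explicit interval; that interval is nonempty only when $\lambda_1^2-4\lambda_1\lambda_2-4\mu_2^2\ge 0$, which is strictly stronger than the theorem's sharp bound on part of the parameter range (for $|\lambda_2|$ small relative to $\lambda_1$ it caps $\mu_2$ near $\lambda_1/2$ rather than near $\lambda_1/\sqrt{3}$), and the ``only if'' direction is never addressed at all. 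Your route is sharp: the reduction of the stated bound to the pair $\lambda_1\ge 2\lambda_2$ and $\lambda_1+\lambda_2\ge\sqrt{3}\,\mu_2$ checks out by the case analysis you give, and the circulant with first row $(d,p,q)$, $d=\frac{1}{3}(\lambda_1-2\lambda_2)$, $p,q=\frac{1}{3}(\lambda_1+\lambda_2)\pm\frac{\mu_2}{\sqrt{3}}$, has eigenvalues $d+p+q=\lambda_1$ and $d+p\omega+q\omega^2=-\lambda_2\pm\I\mu_2$ with $\omega=e^{2\pi\I/3}$, and is entrywise nonnegative precisely when $d\ge 0$ and $q\ge 0$ (indeed $p>0$ automatically). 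Note that you do not need the complex unit-lower-triangular scaffolding at all: presenting the circulant and expanding $\det(xI-C)$ settles sufficiency outright, which also disposes of the step you flag as the main obstacle --- tuning a complex $L$ to reproduce the circulant is possible (its first column must be the Perron vector $\mathbf{1}$), but your argument never needs it.

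The one genuine defect is the converse. Invoking \cite{soto} there is circular, since that is the very statement being proved, and ``the standard description of the admissible region'' is left unspecified. The clean fix is already in the paper's preliminaries: $s_1=\lambda_1-2\lambda_2\ge 0$ gives the first inequality, and the JLL inequality $s_1^2\le 3s_2$ (condition (3) of \eqref{JLL} with $k=1$, $m=2$, $n=3$) gives the second, because $s_2=\lambda_1^2+2(\lambda_2^2-\mu_2^2)$ and hence $3s_2-s_1^2=2(\lambda_1+\lambda_2)^2-6\mu_2^2$, so $s_1^2\le 3s_2$ is exactly $\lambda_1+\lambda_2\ge\sqrt{3}\,\mu_2$. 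With that substitution your proof is complete and self-contained, and strictly stronger than the paper's treatment, which realizes only a proper subregion of the ``if'' side and omits the ``only if'' side entirely.
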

 Now we present solution of this Theorem by our method. We consider the matrix $A$ and $L$ as follows:
$$
A= \left[ \begin {array}{ccc} 
\lambda _{1}&i \mu _{2}&0\\
0&\lambda _{2}-i\mu _{2}&-i\mu _{2}\\
0&0&\lambda _{2}+i\mu _{2}\end {array} \right], \qquad
L=\left[ \begin {array}{ccc} 1&0&0\\
l_{{21}}&i&0\\
1&1&1\end {array} \right], 
$$
then  we have 
$$
C=\left[ \begin {array}{ccc}
-l_{21}\mu _{2}+\lambda_{1}&\mu _{2}&0\\
  -\mu _{2}({l_{21}}^{2}+1)+l_{21}(\lambda _{1}-\lambda _{2})&l_{21}\mu _{2}+\lambda _{2}&\mu _{2}\\  
  -\lambda _{2}+\lambda _{1}&0&\lambda _{2}
\end {array} \right],
$$
 and if $\frac{(\lambda_1-\lambda_2)-\sqrt{(\lambda_1-\lambda_2)^2-4\mu_2^2}}{2\mu_2}\le l_{21}\le  \frac{(\lambda_1-\lambda_2)+\sqrt{(\lambda_1-\lambda_2)^2-4\mu_2^2}}{2\mu_2}$ then $C$ is real nonnegative matrix. 
 
 \textit{\textbf{Case 2. $\lambda_2<0$}}:
 In this case the matrices $A$ and $L$ present as 
 $$
 A=\left[ \begin {array}{ccc} \lambda _{1}&i\mu _{2}&0
\\  0&\lambda _{2}-i\mu _{2}&-i\mu _{2}
\\  0&0&\lambda _{2}+i\mu _{2}\end {array} \right], 
\qquad
L=\left[ \begin {array}{ccc} 1&0&0\\ \noalign{\medskip}l_{21}&i&0
\\ \noalign{\medskip}{\frac {{\lambda _{2}}^{2}}{{\mu _{2}}^{2}}}+1&{
\frac {-\lambda _{2}}{\mu _{2}}}i+1&1\end {array} \right],  
 $$
 then the matrix $C=LAL^{-1}$  find as 
 $$
 \left[ \begin {array}{ccc} -l_{21}\,\mu _{2}+\lambda _{1}&\mu _{2}&0
\\ \noalign{\medskip}-{\frac {{\mu _{2}}^{2}{l_{21}}^{2}-l_{21}\,
\lambda _{1}\,\mu _{2}+2\,\mu _{2}\,l_{21}\,\lambda _{2}+{\mu _{2}}^{2
}+{\lambda _{2}}^{2}}{\mu _{2}}}&l_{21}\,\mu _{2}+2\,\lambda _{2}&\mu 
_{2}\\ \noalign{\medskip}{\frac { \left( {\mu _{2}}^{2}+{\lambda _{2}}
^{2} \right) \lambda _{1}}{{\mu _{2}}^{2}}}&0&0\end {array} \right].
 $$
 If $l_{21}$ lies in the interval 
 $$
  \frac {\lambda _{1}-2\,\lambda _{2}-\sqrt {-4\,{\mu _{2}}^{2}+{
\lambda _{1}}^{2}-4\,\lambda _{1}\,\lambda _{2}}}{2\mu _{2}}
\le l_{21}
  \le  \frac {\lambda _{1}-2\,\lambda _{2}+\sqrt {-4\,{\mu _{2}}^{2}+{
\lambda _{1}}^{2}-4\,\lambda _{1}\,\lambda _{2}}}{2\mu _{2}},
$$
 then the matrix $C$ is nonnegative. 
 
 \subsection{The case n=4}
 The case $n=4$ very similar to the  previous case. Let $\sigma = \{ \lambda_1, \lambda_2, \lambda_3 \pm \mu_3 i\}$ with nonnegative summation.
 If $\lambda_2>0$ and $\lambda_1$ is the Perron eigenvalue, then by the  cases 2 of subsection 6.1,  we construct the $3 \times 3$ nonnegative  matrix  $C_1$ and  we add the value of $\lambda_2$  on the main diagonal  and find  $4 \times 4$ matrix    $C= \left ( \begin{array}{cc}
C_1 & 0 \\ 
0 & \lambda_2
\end{array} \right ).$ 
 The matrix $C$ is solution of problem and has spectrum $\sigma$.
 
  If $\lambda_2 <0$, 
 we construction the matrix $C$, by combination of subsection 6.1 and some of the related real above sections i.e. we add the value $-\lambda_2$  in the first row of matrix $A$ in the same column that the value  $\lambda_2$ lies on the main diagonal of matrix $A$, and the another entries of matrix $A$ and the matrix $L$ are the combination of  above real sections and  above complex subsection is constructed.
 \begin{example}
 Consider $\sigma=\{6, -2, -2-i, -2 +i\}$. Since the sum of $\lambda_i$ equal zero, then by our method we find a nonnegative matrix with zero trace. For this we consider two matrices $A$ and $L$ as follows:
 \[
 A=\left[ \begin {array}{cccc} 6&2&i&0\\\noalign{\medskip}0&-2&0&0
\\\noalign{\medskip}0&0&-2-i&-i\\\noalign{\medskip}0&0&0&-2+i
\end {array} \right], \qquad
L=\left[ \begin {array}{cccc} 1&0&0&0\\\noalign{\medskip}1&1&0&0
\\\noalign{\medskip}l_{{31}}&0&i&0\\\noalign{\medskip}5&0&1+2\,i&1
\end {array} \right], 
 \]
 then the matrix $C$ equals 
 \[
 C=LAL^{-1}=\left[ \begin {array}{cccc} 4-l_{{31}}&2&1&0\\\noalign{\medskip}6-l_{
{31}}&0&1&0\\\noalign{\medskip}8\,l_{{31}}-{l_{{31}}}^{2}-5&2\,l_{{31}
}&l_{{31}}-4&1\\\noalign{\medskip}20&10&0&0\end {array} \right],
 \]
 if $4-\sqrt{11} \leq l_{31} \leq 4$, then the matrix $C$ is solution of problem.
 \end{example}
 \subsection{The case $n \ge 5$} For $n\ge 5$ we combine the  above complex case  and above real cases and find solution. We will give  some examples that already are solved and again solve them by our method. 
  \begin{example} \cite{Robiano2018} Let $\sigma=\{12, i\sqrt{3}, -i\sqrt{3} , 4+3i, 4-3i\}$, with nonnegative 
  real part of complex eigenvalues,  then there exist a nonnegative matrix that $\sigma$ is spectrum.
  
For solving this problem we consider two m  matrices $A$ and $L$  as follows:
\[
A=\left[ \begin {array}{ccccc} 12&i\sqrt {3}&0&3\,i&0
\\\noalign{\medskip}0&-i\sqrt {3}&-i\sqrt {3}&0&0\\\noalign{\medskip}0
&0&i\sqrt {3}&0&0\\\noalign{\medskip}0&0&0&4-3\,i&-3\,i
\\\noalign{\medskip}0&0&0&0&4+3\,i\end {array} \right], \qquad
L=\left[ \begin {array}{ccccc} 1&0&0&0&0\\\noalign{\medskip}l_{{21}}&i&0
&0&0\\\noalign{\medskip}1&1&1&0&0\\\noalign{\medskip}l_{{41}}&0&0&i&0
\\\noalign{\medskip}1&0&0&1&1\end {array} \right], 
\]
then the following   matrix $C$, 
\[
C=\left[ \begin {array}{ccccc} 12-l_{{21}}\sqrt {3}-3\,l_{{41}}&\sqrt {
3}&0&3&0\\\noalign{\medskip}-{l_{{21}}}^{2}\sqrt {3}+ \left( 12-3\,l_{
{41}} \right) l_{{21}}-\sqrt {3}&l_{{21}}\sqrt {3}&\sqrt {3}&3\,l_{{21
}}&0\\\noalign{\medskip}12-3\,l_{{41}}&0&0&3&0\\\noalign{\medskip}8\,l
_{{41}}-l_{{41}}\sqrt {3}l_{{21}}-3-3\,{l_{{41}}}^{2}&l_{{41}}\sqrt {3
}&0&3\,l_{{41}}+4&3\\\noalign{\medskip}8-l_{{21}}\sqrt {3}&\sqrt {3}&0
&0&4\end {array} \right].
\]
is nonnegative and has spectrum $\sigma$, if satisfies the  conditions 
\begin{align*}
 &0\le l_{41}\le 4-\frac{2}{3}\sqrt{3}, \\
 &-1/2\,\sqrt {3}l_{41}+2\,\sqrt {3}-1/2\,\sqrt {3\,{l_{41}}^{2}-24\,l_{
41}+44}
 \le l_{21}, \\
&  -1/2\,\sqrt {3}l_{41}+2\,\sqrt {3}+1/2\,\sqrt {3\,{l_{41}}^{2}-24\,l_{
41}+44}\ge l_{21}.
\end{align*}
\end{example}
\begin{example} Let $\sigma=\{6, -2-3i , -2 +3i , -1 -i , -1 +i\}$, with negative 
  real part of complex eigenvalues,  then there exist a nonnegative matrix that $\sigma$ is spectrum.
  
   For this problem the matrices $A$ and $L$ will be as:
  \[
  A=\left[ \begin {array}{ccccc} 6&3\,i&0&i&0\\\noalign{\medskip}0&-2-3\,
i&-3\,i&0&0\\\noalign{\medskip}0&0&-2+3\,i&0&0\\\noalign{\medskip}0&0&0
&-1-i&-i\\\noalign{\medskip}0&0&0&0&-1+i\end {array} \right],
L=\left[ \begin {array}{ccccc} 1&0&0&0&0\\\noalign{\medskip}l_{{21}}&i&0
&0&0\\\noalign{\medskip}{\frac {13}{9}}&1+2/3\,i&1&0&0
\\\noalign{\medskip}l_{{41}}&0&0&i&0\\\noalign{\medskip}2&0&0&1+i&1
\end {array} \right] 
  \]
  then the matrix $C$ is computed as:
  \[
 C=LAL^{-1} =\left[ \begin {array}{ccccc} 6-3\,l_{{21}}-l_{{41}}&3&0&1&0
\\\noalign{\medskip}10\,l_{{21}}-3\,{l_{{21}}}^{2}-13/3-l_{{21}}l_{{41
}}&3\,l_{{21}}-4&3&l_{{21}}&0\\\noalign{\medskip}{\frac {26}{3}}-{
\frac {13}{9}}\,l_{{41}}&0&0&{\frac {13}{9}}&0\\\noalign{\medskip}8\,l
_{{41}}-3\,l_{{21}}l_{{41}}-2-{l_{{41}}}^{2}&3\,l_{{41}}&0&l_{{41}}-2&
1\\\noalign{\medskip}12-6\,l_{{21}}&6&0&0&0\end {array} \right].
  \]
  To obtain the nonnegative matrix  $C$ it is necessary that  $l_{21} =\frac43$ and $l_{41}=2$, then the matrix $C$ is nonnegative and has spectrum $\sigma$  and the values of $l_{21}$ and $ l_{41}$ are unique in this example. 
  \end{example}
\bigskip
\noindent
\bibliographystyle{amsplain}

\vspace{0.1in}
\hrule width \hsize \kern 1mm
\end{document}